\documentclass[11pt,a4paper]{amsart}

\usepackage{hyperref}
\usepackage{amsmath,amssymb,latexsym,amsthm}
\usepackage{url}
\usepackage{graphicx,color}
\usepackage[notref,notcite,final]{showkeys}
\usepackage{mathdots}

\usepackage[ruled,linesnumbered]{algorithm2e}
\usepackage[margin=1.2in]{geometry}
\usepackage{caption}
\usepackage{mathtools}
\usepackage{soul}

\usepackage{cite}
\usepackage{yhmath}

\newtheorem{thm}{Theorem}
\newtheorem{lemma}[thm]{Lemma}
\newtheorem{prop}[thm]{Proposition}

\newtheorem{remark}[thm]{Remark}

\DeclareMathOperator*{\dif}{d\!}

\graphicspath{{figures/}}

\title[Linearized Boundary Control Method for Damping Reconstruction]{Linearized Boundary Control Method for Damping Reconstruction in an Acoustic Inverse Boundary Value Problem}

\author{Tianyu Yang}
\address{Department of Computational Mathematics Science and Engineering, Michigan State University, East Lansing, MI 48824, USA}
\email{yangti27@msu.edu}
\author{Yang Yang}
\address{Department of Computational Mathematics Science and Engineering, Michigan State University, East Lansing, MI 48824, USA}
\email{yangy5@msu.edu}

\date{}

\begin{document}

\keywords{inverse boundary value problem, damped wave equation, Neumann-to-Dirichlet map, linearized boundary control method, stability estimates}

\begin{abstract}
We develop a linearized boundary control method for the inverse boundary value problem of determining the damping coefficient in the damped wave equation. The objective is to reconstruct an unknown perturbation in a known background damping from the linearized Neumann-to-Dirichlet map. 

When the linearization is at a constant background damping, we derive a reconstructive algorithm with stability estimates based on the boundary control method in dimension $n\geq 1$. The reconstruction algorithm is implemented in one dimension to validate its numerical feasibility.
When the linearization is at a non-constant background damping, we establish an increasing stability estimate in the time domain in dimension $n\geq 3$.
\end{abstract}

\maketitle

\section{Introduction}

\textbf{Inverse Boundary Value Problem}
Let $\Omega$ be a bounded domain in $\mathbb{R}^n$ ($n\geq 1$) with smooth boundary. Consider the following boundary value problem for the wave equation
\begin{equation}\label{eq:wave}
\left\{
\begin{alignedat}{2}
    \square_{\rho,\sigma} u(t,x)&=0\quad&&\text{ in }(0,2T)\times\Omega,\\
    u(0,x)=\partial_tu(0,x)&=0\quad&&\text{ on } \Omega,\\
    \partial_\nu u(t,x)&=f\quad&&\text{ on } (0,2T)\times\partial\Omega,
\end{alignedat}
\right.
\end{equation}
where $\square_{\rho,\sigma}$ is a linear wave operator defined as
\begin{equation}
\square_{\rho,\sigma}u(t,x)\coloneqq\rho(x)\partial_t^2u(t,x)+\sigma(x)\partial_tu(t,x)-\Delta u(t,x)
\end{equation}
and $\nu$ is the unit outer normal vector field on $\partial\Omega$.
We will refer to the spatially-varying function $\sigma = \sigma(x)\geq 0$ as the \textit{damping coefficient} or simply \textit{damping}, and the function $\rho=\rho(x)>0$ as the \textit{density}.
Throughout the paper, we will assume $\rho=\rho_0$ where $\rho_0\in C^\infty(\overline{\Omega})$ is a known, strictly positive function.

For $f\in C_c^\infty((0,2T)\times\partial\Omega)$ (where $C_c^\infty((0,2T)\times\partial\Omega)$ denotes the set of compactly supported smooth functions on $(0,2T)\times\partial\Omega)$) and $\rho,\sigma\in C^\infty(\overline{\Omega})$, the standard well-posedness theory for the wave equation ensures the existence of a smooth solution $u=u^f(t,x)$ to the boundary value problem~\eqref{eq:wave}.
Therefore, the following Neumann-to-Dirichlet map (ND map) $\Lambda_{\sigma}$ is well defined:
\begin{equation} \label{eq:NDmap}
    \Lambda_{\sigma}: f\mapsto u^f|_{(0,2T)\times\partial\Omega}
\end{equation}
(see Appendix A for the existence of $u$ and boundedness of $\Lambda_\sigma$ for small $\sigma$). In this paper, we are interested in the \textit{inverse boundary value problem (IBVP)} concerning recovery of $\sigma\in C^\infty(\overline{\Omega})$ from $\Lambda_\sigma$.

\medskip
\textbf{Formal Linearization.} In this paper, we are interested in the linearized version of the inverse problem to recover $\sigma\in C^\infty(\overline{\Omega})$ from $\Lambda_\sigma$. To derive the linearized IBVP, we write
\[\sigma=\sigma_0+\varepsilon\dot{\sigma},\qquad u=u_0+\varepsilon\dot{u},\qquad\Lambda_{\sigma}=\Lambda_{\sigma_0}+\varepsilon\dot{\Lambda}_{\dot{\sigma}},\]
where $\sigma_0$ is the known background damping coefficient, $u_0$ is the background solution, $\dot{\sigma}\in C^\infty(\overline{\Omega})$ is a smooth perturbation to $\sigma_0$, and $\dot{u}$ is a smooth perturbation to $u_0$. The asymptotic expansion in $\varepsilon$ gives
\begin{equation}\label{eq:wave0}
\left\{
\begin{alignedat}{2}
    \square_{\rho_0,\sigma_0} u_0(t,x)&=0\quad&&\text{ in }(0,2T)\times\Omega,\\
    u_0(0,x)=\partial_tu_0(0,x)&=0\quad&&\text{ on } \Omega,\\
    \partial_\nu u_0(t,x)&=f\quad&&\text{ on }(0,2T)\times\partial\Omega,
\end{alignedat}
\right.
\end{equation}
and
\begin{equation}\label{eq:wavedot}
\left\{
\begin{alignedat}{2}
    \square_{\rho_0,\sigma_0} \dot{u}(t,x)&=-\dot{\sigma}(x)\partial_tu_0(t,x)\quad&&\text{ in }(0,2T)\times\Omega,\\
    \dot{u}(0,x)=\partial_t\dot{u}(0,x)&=0\quad&&\text{ on } \Omega,\\
    \partial_\nu \dot{u}(t,x)&=0\quad&&\text{ on }(0,2T)\times\partial\Omega,
\end{alignedat}
\right.
\end{equation}
Using these equations, we define the linearized ND map $\dot{\Lambda}_{\dot{\sigma}}$ by
\begin{equation}
    \dot{\Lambda}_{\dot{\sigma}}: f\mapsto\dot{u}^f|_{(0,2T)\times\partial\Omega},
\end{equation}
where $\dot{u}^f$ denotes the solution of~\eqref{eq:wavedot} with the background solution $u_0^f$ satisfying~\eqref{eq:wave0}. 
For ease of notation, we often write $\dot{\Lambda}_{\dot\sigma}$ and $\Lambda_\sigma$ as $\dot{\Lambda}$ and $\Lambda$, respectively.

\medskip
\textbf{Literature:} Inverse boundary value problems (IBVP) for wave equations have been extensively studied in the mathematical literature, with a wide range of results on uniqueness, stability, and reconstruction methods. When $\sigma \equiv 0$, the density $\rho(x)$ is related to the wave speed $c(x)$ by $\rho(x) = c^{-2}(x)$. Belishev~\cite{belishev1988approach} proved that the wave speed $c$ (and hence $\rho$) is uniquely determined from boundary measurements using the boundary control (BC) method combined with Tataru’s unique continuation theorem~\cite{tataru1995unique, Tataru99unique}. The BC method was subsequently generalized to the reconstruction of Riemannian manifolds~\cite{belishev1992reconstruction}, symmetric time-independent lower-order perturbations of wave operators~\cite{MR1331288}, as well as non-symmetric, time-dependent, and matrix-valued lower-order perturbations~\cite{MR1784415, MR3880231}.
When $\sigma \not\equiv 0$, Pestov~\cite{pestov2012inverse} extended the BC method to include damping and proved the unique determination of the damping coefficient under the assumption that the density is known. 
This result was further extended in the subsequent work~\cite{pestov2014determining} by showing that the damping coefficient and density can be determined simultaneously. Overall, the BC method has proven to be a powerful framework for establishing uniqueness in a broad class of inverse problems. We refer to~\cite{MR2353313, MR1889089, belishev2017boundary} for comprehensive surveys of the BC method.

Stability estimates for the IBVP have been derived using both microlocal analysis and the BC method. Under appropriate geometric conditions, recovery of the wave speed $c = \rho^{-1/2}$ is shown to be H\"older stable, even in the anisotropic setting where the speed is described by a Riemannian metric~\cite{stefanov1998stability, stefanov2005stable, bellassoued2010stability, montalto2014stable}. Furthermore, a low-pass filtered version of $c$ can be reconstructed with Lipschitz stability~\cite{liu2016lipschitz}. H\"older stability results for the damping coefficient $\sigma$, under suitable geometric assumptions, have been established in~\cite{bellassoued2017stable, stefanov2018lorentzian, bellassoued2019simultaneous}.
For more general geometries, stability estimates based on the BC method were obtained in~\cite{MR2096795}, where an abstract modulus of continuity was derived, and more recently in~\cite{bosi2022reconstruction, burago2021stability}, where a doubly logarithmic modulus of continuity was established.

Numerical implementation of the BC method for reconstructing the wave speed was initiated in~\cite{belishev1999dynamical} and further developed in~\cite{belishev2016numerical, de2018recovery, pestov2010numerical, yang2021stable}. The approaches in~\cite{belishev1999dynamical, belishev2016numerical, de2018recovery} rely on solving unstable control problems, while those in~\cite{pestov2010numerical, yang2021stable} employ stable control problems but involve target functions with exponential growth or decay, which may also lead to numerical instability.
In contrast, the linearized BC approaches proposed in~\cite{oksanen2022linearized, oksanen2024linearized} are inherently stable. In the one-dimensional setting, the BC method can be implemented in a fully stable manner~\cite{korpela2018discrete}. Recently, the BC method has also been employed for numerical conversion of wave data from the time domain to the frequency domain~\cite{yang2026formula}.

\medskip
\textbf{Contribution of the Paper.} 
The paper develops analytical tools for the linearized IBVP associated with the damped wave equation. We establish several results concerning the uniqueness, stability, and reconstruction of $\dot\sigma$ including:
\begin{itemize}
    \item \emph{A linearized Blagove\u{s}\u{c}enski\u{ı} identity with a complex-valued free parameter.}  
    Blagove\u{s}\u{c}enski\u{ı} identities play a central role in the boundary control method by relating boundary measurements to inner products of wave fields. In the context of linearized IBVPs, the authors’ earlier works~\cite{oksanen2022linearized, oksanen2024linearized} derived linearized Blagove\u{s}\u{c}enski\u{ı} identities with a real-valued free parameter for the recovery of potentials and densities, respectively. The presence of a free parameter enlarges the class of admissible test functions used to probe the unknown coefficient, thereby leading to improved stability and reconstruction properties. In the present work, we extend this approach to damped wave equations. A major technical challenge is that wave operators with damping terms are no longer self-adjoint on the standard $L^2$ spaces, in contrast to the operators considered in~\cite{oksanen2022linearized, oksanen2024linearized}. To address this issue, we derive a new version of the linearized Blagove\u{s}\u{c}enski\u{ı} identity that involves a complex-valued free parameter in Proposition~\ref{thm:linearizedBid}. This result generalizes the linearized Blagove\u{s}\u{c}enski\u{ı} identity with a real-valued free parameter obtained in our earlier work~\cite{oksanen2022linearized, oksanen2024linearized}, and forms the foundation for the subsequent uniqueness, stability, and reconstruction results.
    
    \item \emph{Uniqueness, stability estimate, and reconstruction formula in dimension $n\ge 1$ for constant background damping.}  
    While the uniqueness and stability of the nonlinear IBVP have been investigated separately in several works, this paper focuses systematically on the linearized problem. We provide unified results addressing uniqueness, stability, and reconstruction. When the background damping coefficient $\sigma_0$ is constant, we derive an explicit reconstruction formula in Proposition~\ref{thm:reconformula} for all dimensions $n \geq 1$. This formula is shown to satisfy a pointwise Lipschitz-type stability estimate in the Fourier domain in Proposition~\ref{thm:stab}. The reconstruction formula further leads to a reconstruction algorithm, summarized in Algorithm~\ref{alg:reconstruction}. The algorithm is numerically validated and assessed in dimension $n=1$.

    \item \emph{A time-domain increasing stability estimate in dimension $n \geq 3$ for non-constant background damping.}  
    By exploiting the linearized Blagove\u{s}\u{c}enski\u{ı} identity with a suitably chosen complex-valued free parameter, we derive a stability estimate for the recovery of the damping perturbation $\dot{\sigma}$. The resulting estimate combines a H\"older-type stability with a logarithmic stability component. Moreover, the logarithmic contribution can be reduced by an appropriate choice of the free parameter, yielding an estimate that is close to H\"older stability. This effect, known as the \emph{increasing stability} phenomenon, is well studied in the frequency-domain analysis of the Helmholtz equation; see, for example,~\cite{cheng2016increasing, hrycak2004increased, isakov2010increasing, isakov2016increasing, isakov2018increasing, isakov2014increasing, kow2021optimality, nagayasu2013increasing}. In this work, we establish an increasing stability estimate in the time domain for dimensions $n \geq 3$; see Theorem~\ref{thm:IncreasingStability}. An important observation is that the appropriately chosen complex-valued free parameter effectively plays the role of a frequency variable in the construction of probing test functions.
\end{itemize}

The paper is structured as follows. Section~\ref{sec:prelim} derives some preliminary results, including formulation of the damped wave equation as a first-order system and the boundedness of the linearized ND map $\dot\Lambda$. Section~\ref{sec:id} is devoted to the proof of the linearized Blagove\u{s}\u{c}enski\u{ı} identity with a complex-valued free parameter. Section~\ref{sec:control} proves a boundary control result. Section~\ref{sec:reconandstab} establishes the reconstruction formula and several stability estimates. Section~\ref{sec:numvalid} consists of numerical implementation and validation of the reconstruction formula using one-dimensional examples.

\section{Preliminary} \label{sec:prelim}

\textbf{Damped Wave Equation as a First-Order System.}
Henceforth, we will deal with the wave equation with damping as a first-order linear system. To this end, we denote $u_t:=\partial_t u$ and introduce new variables
\[p\coloneqq\sqrt{\rho}u_t,\qquad q\coloneqq\nabla u.\]
The damped wave equation can be written as the following first-order linear system:
\begin{equation}\label{eq:wavesystem}
    \partial_t\begin{pmatrix}p\\q\end{pmatrix}=\begin{pmatrix}-\sigma\rho^{-1}&\rho^{-1/2}\nabla\cdot\\\nabla[\rho^{-1/2}\cdot]&0\end{pmatrix}\begin{pmatrix}p\\q\end{pmatrix}
\end{equation}
where $\nabla\cdot$ denotes the divergence operator. Accordingly, the ND map can be written as
\begin{equation}
    \Lambda_\sigma: \nu\cdot q \mapsto \partial^{-1}_t \left(\rho^{-1/2} p \right)
\end{equation}
where $\partial^{-1}_t$ is defined as $\partial^{-1}_t f(t) = \int^t_0 f(\tau) \, \dif \tau$.
Similarly, the linearized ND map can be written as
\begin{equation}
    \dot{\Lambda}_{\dot\sigma}: \nu\cdot q_0 \mapsto \partial^{-1}_t \left(\rho^{-1/2} \dot{p} \right)
\end{equation}
where $q_0=\nabla u_0$ and $\dot{p} = \sqrt{\rho_0} \dot{u}_t$ with $u_0,\dot{u}$ the solutions of~\eqref{eq:wave0} ~\eqref{eq:wavedot}, respectively.

\medskip
\textbf{Boundedness of $\dot\Lambda$.} We begin by showing that $\dot\Lambda$ is a bounded linear operator between suitable spaces. For any real number $s\geq \frac{1}{2}$, denote by $H_{00}^s((0,2T)\times\partial\Omega)$ the closure of the set $\{f|_{(0,2T)\times\partial\Omega}: f\in C^\infty_{c}((0,2T]\times\partial\Omega)\}$ in $H^s((0,2T)\times\partial\Omega)$ with respect to the $H^s$-norm, where $H^s$ is the $L^2$-based Sobolev space of order $s$.

\begin{prop} \label{thm:Lambdadotboundedness}
    For any real number $s\geq \frac{1}{2}$, $\dot\Lambda:H_{00}^s((0,2T)\times\partial\Omega) \rightarrow H^s((0,2T)\times\partial\Omega)$ is a bounded linear operator.
\end{prop}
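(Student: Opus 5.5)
Since $\dot\Lambda f=\dot u^f|_{(0,2T)\times\partial\Omega}$ is obtained by composing two solution operators, the plan is to bound each of them and then apply a trace theorem. In the first step, for $f\in H^s_{00}((0,2T)\times\partial\Omega)$, we solve the background problem~\eqref{eq:wave0}. We rely on the standard regularity theory for second-order hyperbolic Neumann problems, namely the Lasiecka--Triggiani sharp regularity, in the form recorded in Appendix A for the map $\Lambda_\sigma$. Here the damping term $\sigma_0\partial_t u_0$ is a lower-order perturbation, and we absorb it by a Gronwall/energy argument. The output is $u_0\in H^{s+\alpha}((0,2T)\times\Omega)$ for some $\alpha>0$; more crudely, we obtain $\partial_t u_0\in H^{s-1+\alpha}$ with a norm bound $\lesssim\|f\|_{H^s}$. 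The compatibility conditions at $t=0$ hold because $f$ vanishes to infinite order near $t=0$, which is exactly why the space $H^s_{00}$ is used. Density of $C_c^\infty((0,2T]\times\partial\Omega)$ then lets us argue for smooth $f$ and extend by continuity.

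In the second step we treat~\eqref{eq:wavedot}. This is a damped wave equation with homogeneous Neumann data, zero initial data, and source $F=-\dot\sigma\,\partial_t u_0$. Because $\dot\sigma\in C^\infty(\overline\Omega)$, multiplication by $\dot\sigma$ is bounded on every Sobolev space, so $F$ has the regularity of $\partial_t u_0$. The key gain is that the interior-source problem is one derivative smoother than the boundary-source problem. We use the energy estimate for $(\dot p,\dot q)$ from the first-order system~\eqref{eq:wavesystem}. This estimate is obtained by applying $\partial_t^k$, which commutes with the time-independent operator, and then using elliptic regularity in $x$ from the Neumann Laplacian. It gives $\dot u\in H^{k+1}((0,2T)\times\Omega)$ whenever $F\in H^{k}$. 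Since the initial data are zero and $F$ vanishes near $t=0$, compatibility is again automatic. Consequently $\dot u\in H^{s+\alpha}((0,2T)\times\Omega)$ with $\|\dot u\|_{H^{s+\alpha}}\lesssim\|f\|_{H^s}$. Non-integer orders are handled by interpolation between consecutive integers.

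In the third step we take the trace. The trace map $H^{s+1/2}((0,2T)\times\Omega)\to H^{s}((0,2T)\times\partial\Omega)$ is bounded. We therefore need $\alpha\ge 1/2$, i.e.\ we need the combined gain of the two steps to be at least a half derivative. We would try to get this by counting as follows. For the Neumann problem, $u_0\in H^{s+1/2+\beta}$ with $\beta=1/4$ in general domains, or $\beta=1/2$ when $\partial\Omega$ is convex. The interior step then adds one full derivative on top of the derivative lost in $\partial_t u_0$. Altogether this gives $\dot u\in H^{s+1/2+\beta}$, which is more than enough. Linearity of $\dot\Lambda$ is clear.

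The main obstacle is the first step. We need sharp Neumann-to-interior regularity with damping on a general smooth domain, uniformly for real $s\ge 1/2$. We would cite Lasiecka--Triggiani for $\sigma_0=0$ and treat $\sigma_0\partial_t u_0$ perturbatively via Duhamel's formula, using the fact that a zero-order term in the first-order system is a bounded perturbation of the generator. If the sharp exponent turns out to be delicate, we fall back on an easier route. We use the Neumann lifting $u_0\in H^{s-1/2}$, which follows from energy methods together with a harmonic extension of $f$. The full derivative gained in the second step then still gives $\dot u\in H^{s+1/2}$, and that suffices for the trace.
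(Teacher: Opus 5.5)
Your three-step architecture is exactly the paper's: regularity of $u_0$, then a one-derivative gain for the interior-source problem~\eqref{eq:wavedot}, then the trace $H^{s+1/2}\to H^s$. You also correctly reduce everything to one fact: $u_0\in H^{s+1/2}((0,2T)\times\Omega)$ with $\|u_0\|_{H^{s+1/2}}\lesssim\|f\|_{H^s}$.

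The gap is that Step~1, the only non-routine step, is not established by what you wrote. Your main route hands it to sharp Lasiecka--Triggiani/Tataru regularity, but with the wrong exponents. Your $\beta=1/4$ (interior gain $3/4$) is, as far as I recall, the flat-boundary value rather than the general one; Tataru's general sharp value is $2/3$. Your $\beta=1/2$ (a full-derivative gain) already fails for a flat boundary when $n\ge 2$. Convexity is, if anything, the unfavourable case because of glancing rays. All the genuine exponents exceed $1/2$, so your conclusion happens to survive. You would still have to carry those $L^2$-data theorems over to all real $s\ge 1/2$ and to the damped operator, which you only assert.

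Your fallback fails outright. From $u_0\in H^{s-1/2}$ you get $\partial_t u_0\in H^{s-3/2}$. The one-derivative gain then yields only $\dot u\in H^{s-1/2}$, whose trace lies at best in $H^{s-1}$ and is undefined for $s\le 1$. That is a full derivative short of $H^s$. The root cause is that a harmonic extension of $f$ in $x$ alone gains nothing in $t$, so applying $\rho_0\partial_t^2$ to it costs two time derivatives.

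The missing idea is a genuine space-time Neumann lifting, which makes Step~1 as elementary as Step~2. Extend $f\in C_c^\infty((0,2T]\times\partial\Omega)$ to $F\in H^{s+3/2}((0,2T)\times\Omega)$ with $\partial_\nu F=f$, $F=0$ near $t=0$, and $\|F\|_{H^{s+3/2}}\le C\|f\|_{H^s}$. This is a right inverse of the normal-derivative trace on the cylinder, and it gains $3/2$ derivatives in $t$ as well as in $x$. Then $v=u_0-F$ solves the damped equation with zero Cauchy data, homogeneous Neumann data, and source $-\square_{\rho_0,\sigma_0}F\in H^{s-1/2}$. The same interior estimate you use in Step~2 now gives $v\in H^{s+1/2}$, hence $u_0\in H^{s+1/2}$. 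It applies because $s-1/2\ge 0$, and it needs no smallness of $\sigma_0$, since the damping is a lower-order term in the energy estimate.

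That is exactly the half-derivative you need, with no sharp hyperbolic trace theory. This is the paper's argument, and it also explains the hypothesis $s\ge 1/2$. With Step~1 replaced in this way, your Steps~2 and~3 go through as written.
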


\begin{proof}
The linearity is clear. To show the boundedness, let us take an arbitrary $f\in C^\infty_c((0,2T]\times\partial\Omega)$ and extend it to $F\in H^{s+\frac{3}{2}}((0,2T)\times\Omega)$ such that $\partial_\nu F=f$ and $F(t,x)=0$ for any $x\in\Omega$ and $t$ close to $0$. Such $F$ can be chosen to satisfy
\[\|F\|_{H^{s+\frac{3}{2}}((0,2T)\times\Omega)}\leq C\|f\|_{H^s((0,2T)\times\partial\Omega)}.\]
Denote $v\coloneqq u_0^f-F$ where $u_0^f$ is the solution of \eqref{eq:wave0} with the Neumann boundary condition $f$, then $v$ satisfies
$$
\left\{
\begin{alignedat}{2}
\square_{\rho_0,\sigma_0}v(t,x)& = - \square_{\rho_0,\sigma_0}F(t,x)\quad&&\text{ in }(0,2T)\times\Omega,\\
    v(0,x)=\partial_tv(0,x)&=0\quad&&\text{ on } \Omega,\\
    \partial_\nu v(t,x)&=0\quad&&\text{ on }(0,2T)\times\partial\Omega.
\end{alignedat}
\right.
$$
According to the regularity estimate for the wave equation~\cite{evans1998partial},
$$
    \|v\|_{H^{s+\frac{1}{2}}((0,2T)\times\Omega)}\leq C\|\square_{\rho_0,\sigma_0}F\|_{H^{s-\frac{1}{2}}((0,2T)\times\Omega)}\leq C\|F\|_{H^{s+\frac{3}{2}}((0,2T)\times\Omega)}.
$$
Hence $u_0^f = v+F \in{H^{s+\frac{1}{2}}((0,2T)\times\Omega)}$, then $\dot{\sigma}\partial_tu_0^f\in{H^{s-\frac{1}{2}}((0,2T)\times\Omega)}$ since $\dot{\sigma}\in C^\infty(\overline{\Omega})$. The wave regularity estimate applied to \eqref{eq:wavedot} gives
$$
    \|\dot{u}^f\|_{H^{s+\frac{1}{2}}((0,2T)\times\Omega)}\leq C\|\dot{\sigma}\partial_tu_0^f\|_{H^{s-\frac{1}{2}}((0,2T)\times\Omega)}\leq C\|u_0^f\|_{H^{s+\frac{1}{2}}((0,2T)\times\Omega)}.
$$
Together with trace estimation, we get
$$
    \|\dot{\Lambda}f\|_{H^s((0,2T)\times\Omega)}\leq C\|\dot{u}^f\|_{H^{s+\frac{1}{2}}((0,2T)\times\Omega)}\leq C\|f\|_{H^s((0,2T)\times\partial\Omega)},
$$
where the constant $C$ is independent of $f$. The result follows since $C_{c}^\infty((0,2T]\times\partial\Omega)$ is dense in $H_{00}^s((0,2T)\times\partial\Omega)$ with respect to the $H^s$-norm.

\end{proof}

\section{Linearized Blagove\u{s}\u{c}enski\u{ı} Identity with a Free Complex-Valued Parameter} \label{sec:id}

The next result represents wave inner products inside $\Omega$ in terms of boundary integrals. This type of relation is known as the Blagove\u{s}\u{c}enski\u{ı} identity~\cite{blagovestchenskiiinverse}. Here, we write the identity using the variables $(p,q)$. Another proof of the same identity using integration by parts is established in~\cite{pestov2012inverse, pestov2014determining}.

\begin{prop}
Let $u^f,u^h$ be the solution of \eqref{eq:wave} with Neumann traces $f,h\in C_c^\infty((0,2T]\times\partial\Omega)$, respectively. Let $(p^f,q^f)$, $(p^h,q^h)$ denote the corresponding solutions of \eqref{eq:wavesystem}. Then
\begin{equation} \label{eq:Bidentity}
    \left\langle\begin{pmatrix}p^f(T)\\-q^f(T)\end{pmatrix},\begin{pmatrix}p^h(T)\\q^h(T)\end{pmatrix}\right\rangle_{L^2(\Omega)}=\langle f(\cdot),[\Lambda_{\sigma}h_t](2T-\cdot)\rangle_{L^2((0,T)\times\partial\Omega)}-\langle[\Lambda_{\sigma}f_t](\cdot),h(2T-\cdot)\rangle_{L^2((0,T)\times\partial\Omega)}
\end{equation}
where $\langle\cdot,\cdot\rangle$ is the $L^2$-inner product for vector-valued functions.
\end{prop}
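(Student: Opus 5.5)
The plan is to introduce the time-reversed pairing
\[
W(t)\coloneqq\left\langle\begin{pmatrix}p^f(t)\\-q^f(t)\end{pmatrix},\begin{pmatrix}p^h(2T-t)\\q^h(2T-t)\end{pmatrix}\right\rangle_{L^2(\Omega)}
=\int_\Omega \rho\, u^f_t(t)\,u^h_t(2T-t)-\nabla u^f(t)\cdot\nabla u^h(2T-t)\,\dif x
\]
for $t\in[0,T]$. At $t=T$ this is exactly the left-hand side of \eqref{eq:Bidentity}. At $t=0$ we have $W(0)=0$, because $u^f(0)=\partial_tu^f(0)=0$ forces $p^f(0)=0$ and $q^f(0)=0$. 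It therefore suffices to show that $W'(t)$ equals the integrand in $t$ of the right-hand side, and then integrate over $(0,T)$. The reflection $s=2T-t$ is forced by the damping: the damped operator is not self-adjoint, and its adjoint is the damped operator run backward in time.

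I will differentiate $W$, using the chain rule with $s=2T-t$:
\[
W'(t)=\int_\Omega \rho u^f_{tt}(t)u^h_t(s)-\rho u^f_t(t)u^h_{tt}(s)-\nabla u^f_t(t)\cdot\nabla u^h(s)+\nabla u^f(t)\cdot\nabla u^h_t(s)\,\dif x.
\]
Next I substitute $\rho u_{tt}=\Delta u-\sigma u_t$ into both second-derivative terms. The two damping contributions are $-\sigma u^f_t u^h_t$ and $+\sigma u^f_t u^h_t$, so they cancel exactly; this cancellation is the key structural point. The remaining terms are $\int_\Omega \Delta u^f(t)\,u^h_t(s)-u^f_t(t)\,\Delta u^h(s)$ together with the two gradient terms. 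Applying Green's formula to each Laplacian term produces interior gradient terms that cancel the gradient terms already present. The Neumann data $\partial_\nu u^f=f$ and $\partial_\nu u^h=h$ then leave
\[
W'(t)=\int_{\partial\Omega} f(t)\,u^h_t(2T-t)-u^f_t(t)\,h(2T-t)\,\dif S .
\]

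The remaining step is to identify the boundary traces of $u_t$ with the ND map applied to $f_t$ and $h_t$. Because $f$ and $h$ are supported in $(0,2T]$, they vanish near $t=0$, and the coefficients are time-independent. Hence $\partial_t u^h$ solves \eqref{eq:wave} with Neumann data $h_t$ and zero initial data, which gives $\partial_t u^h=u^{h_t}$ and $u^h_t|_{\partial\Omega}=\Lambda_\sigma h_t$, and likewise for $f$. Integrating $W'$ from $0$ to $T$ then yields \eqref{eq:Bidentity}.

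I expect the main obstacle to be justifying these manipulations rather than the algebra. Three points need care: smoothness of $u^f$ and $u^h$ up to the boundary, so that differentiation under the integral and Green's formula are legitimate; the vanishing of the initial data of $\partial_t u$, which uses $\rho u_{tt}(0)=\Delta u(0)-\sigma u_t(0)=0$; and uniqueness for \eqref{eq:wave}. For smooth compactly supported data, all of these follow from the standard well-posedness theory invoked in the paper's Appendix~A.
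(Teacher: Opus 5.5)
Your proof is correct and is essentially the paper's argument: your $W(t)$ is the paper's two-time pairing $I(t,s)$ restricted to the characteristic $s=2T-t$. Thus $W'(t)$ is the paper's $(\partial_t-\partial_s)I$, in which the damping terms cancel and integration by parts leaves only boundary terms, and integrating from $W(0)=0$ is the paper's transport-equation solution evaluated at $t=s=T$. The differences are cosmetic (you compute in $u$ with Green's formula rather than in the first-order $(p,q)$ system); note, though, that the paper keeps $I(t,s)$ for general $(t,s)$ because Proposition~\ref{thm:linearizedBid} later needs $\partial_t I$ at $(T,T)$, which lies off your single characteristic (your argument extends trivially by replacing $2T$ with any $t+s$).
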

\begin{proof}
The assumption ensures that $p^f, p^h, q^f, q^h$ are smooth functions. Define
$$
    I(t,s)\coloneqq\left\langle\begin{pmatrix}p^f(t)\\-q^f(t)\end{pmatrix},\begin{pmatrix}p^h(s)\\q^h(s)\end{pmatrix}\right\rangle_{L^2(\Omega)}.
$$
Note $I(0,s)=0$ due to the initial conditions for $u^f$ and $u^h$. We differentiate in $t$ and $s$ respectively, and utilized the system~\eqref{eq:wavesystem} to have
$$
    \partial_tI(t,s) = \left\langle\begin{pmatrix}-\sigma\rho^{-1}&\rho^{-1/2}\nabla\cdot\\-\nabla[\rho^{-1/2}\cdot]&0\end{pmatrix}\begin{pmatrix}p^f(t)\\q^f(t)\end{pmatrix},\begin{pmatrix}p^h(s)\\q^h(s)\end{pmatrix}\right\rangle_{L^2(\Omega)},
$$
and
$$    
\partial_sI(t,s) = \left\langle\begin{pmatrix}p^f(t)\\-q^f(t)\end{pmatrix},\begin{pmatrix}-\sigma\rho^{-1}&\rho^{-1/2}\nabla\cdot\\\nabla[\rho^{-1/2}\cdot]&0\end{pmatrix}\begin{pmatrix}p^h(s)\\q^h(s)\end{pmatrix}\right\rangle_{L^2(\Omega)}.
$$
Subtract these two equations to get 
$$
    \begin{aligned}
    [\partial_t-\partial_s]I(t,s) &= \langle \rho^{-1/2}p^h(s), \nabla\cdot q^f(t) \rangle_{L^2(\Omega)} + \langle q^f(t), \nabla \left( \rho^{-1/2} p^h \right)(s) \rangle_{L^2(\Omega)} \\
    & \quad - \langle q^h(s), \nabla \left( \rho^{-1/2} p^f \right)(t) \rangle_{L^2(\Omega)} - \langle \rho^{-1/2}p^f(t), \nabla\cdot q^h(s) \rangle_{L^2(\Omega)} \\
    & = \langle \rho^{-1/2}p^h(s),\nu\cdot q^f(t)\rangle_{L^2(\partial\Omega)} - \langle \rho^{-1/2}p^f(t),\nu \cdot q^h(s)\rangle_{L^2(\partial\Omega)}\\
    &= \langle \partial_s\Lambda_{\sigma} h(s), f(t)\rangle_{L^2(\partial\Omega)}-\langle \partial_t\Lambda_{\sigma} f(t), h(s)\rangle_{L^2(\partial\Omega)}
    \end{aligned}
$$
where the second equality follows from the integration-by-parts formula $\langle a, \nabla\cdot \vec{b} \rangle_{L^2(\Omega)} +  \langle \nabla a, \vec{b} \rangle_{L^2(\Omega)} = \langle a, \nu\cdot \vec{b} \rangle_{L^2(\partial\Omega)}$ for a function $a$ and a vector field $\vec{b}$.
Solve this transport equation to obtain
\begin{equation}\label{eq:Its}
    \begin{aligned}
    I(t,s) &= I(0,t+s) + \int_0^t\langle f(\tau),[\Lambda_{\sigma} h_t](t+s-\tau)\rangle_{L^2(\partial\Omega)}-\langle h(t+s-\tau),[\Lambda_{\sigma} f_t](\tau)\rangle_{L^2(\partial\Omega)}\dif \tau,
    \end{aligned}
\end{equation}
where we used the fact $\partial_t^nu^f = u^{\partial_t^n f}$ to make $\Lambda_\sigma$ and $\partial_t$ commute, which is ensured by the well-posedness of the forward problem~\eqref{eq:wave}. Insert the initial condition $I(0,t+s)=0$ and set $t=s=T$ to get
$$
    \begin{aligned}
    I(T,T) =\langle f(\cdot),[\Lambda_{\sigma}h_t](2T-\cdot)\rangle_{L^2((0,T)\times\partial\Omega)}-\langle[\Lambda_{\sigma}f_t](\cdot),h(2T-\cdot)\rangle_{L^2((0,T)\times\partial\Omega)}.
    \end{aligned}
$$
\end{proof}

For our purpose, we need a linearized version of~\eqref{eq:Bidentity}. We will derive the linearization in detail in the next proposition, where a free complex-valued parameter $\lambda\in\mathbb{C}$ is introduced. As we will see, proper choice of this parameter helps recover the high-frequency content of $\dot\sigma$. This idea of introducing a free parameter in the linearized Blagove\u{s}\u{c}enski\u{ı} identity was developed in our earlier work~\cite{oksanen2022linearized, oksanen2024linearized}.

\begin{prop} \label{thm:linearizedBid}
Let $\lambda\in\mathbb{C}$ be a complex number. If $f,h\in C_c^\infty((0,2T]\times\partial\Omega)$ satisfy
\begin{equation}\label{eq:final}
    \begin{aligned}
    &\left[\begin{pmatrix}-\sigma_0\rho_0^{-1}&\rho_0^{-1/2}\nabla\cdot\\-\nabla[\rho_0^{-1/2}\cdot]&0\end{pmatrix}+\lambda\begin{pmatrix}1&0\\0&-1\end{pmatrix}\right]\begin{pmatrix}p_0^f(T)\\q_0^f(T)\end{pmatrix}\\
    =&\left[\begin{pmatrix}-\sigma_0\rho_0^{-1}&\rho_0^{-1/2}\nabla\cdot\\-\nabla[\rho_0^{-1/2}\cdot]&0\end{pmatrix}+\lambda\begin{pmatrix}1&0\\0&-1\end{pmatrix}\right]\begin{pmatrix}p_0^h(T)\\q_0^h(T)\end{pmatrix}=\begin{pmatrix}0\\0\end{pmatrix},
    \end{aligned}
\end{equation}
then the following equality holds:
\begin{equation}\label{eq:control}
    \begin{aligned}
    \langle p_0^f(T),p_0^h(T)\rangle_{L^2(\Omega,\dot\sigma\rho_0^{-1}\dif x)}  =& -\langle f(T),[\dot{\Lambda} h_t](T)\rangle_{L^2(\partial\Omega)}\\
    &-\langle f(t),[\dot{\Lambda}h_{tt}](2T-t)\rangle_{L^2((0,T)\times\partial\Omega)}+\langle[\dot{\Lambda}f_t](t),h_t(2T-t)\rangle_{L^2((0,T)\times\partial\Omega)}\\
    &-\lambda\langle f(t),[\dot{\Lambda}h_t](2T-t)\rangle_{L^2((0,T)\times\partial\Omega)}+\lambda\langle[\dot{\Lambda}f_t](t),h(2T-t)\rangle_{L^2((0,T)\times\partial\Omega)}
    \end{aligned}
\end{equation}
where $\langle \cdot, \cdot \rangle_{L^2(\Omega,\dot\sigma\rho_0^{-1}\dif x)}$ denotes the weighted $L^2$-inner product with weight $\dot{\sigma}\rho^{-1}_0$.
We write $\rho=\rho_0$ to indicate that the known density is not linearized.
\end{prop}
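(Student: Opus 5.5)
The plan is to linearize the Blagove\u{s}\u{c}enski\u{\i} identity \eqref{eq:Bidentity}, and more precisely its two-time version \eqref{eq:Its}, and then use the constraint \eqref{eq:final} to pick out the interior term carrying $\dot\sigma$.

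\textbf{Step 1: decode \eqref{eq:final}.} Since $(p_0^f,q_0^f)$ solves \eqref{eq:wavesystem} with $\sigma=\sigma_0$, the matrix operator in \eqref{eq:final} applied to $(p_0^f(T),q_0^f(T))$ equals $(\partial_t p_0^f(T),-\partial_t q_0^f(T))$. Hence \eqref{eq:final} says exactly that
\[\partial_t(p_0^f,q_0^f)(T)=-\lambda(p_0^f,q_0^f)(T),\]
and the same holds for $h$.

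\textbf{Step 2: the linearized bilinear form.} The linearized fields $(\dot p,\dot q)$ solve \eqref{eq:wavesystem} with coefficients $(\rho_0,\sigma_0)$, zero Neumann data and the extra source $(-\dot\sigma\rho_0^{-1}p_0,0)$. Define
\[
\dot I(t,s)\coloneqq\left\langle\begin{pmatrix}\dot p^f(t)\\-\dot q^f(t)\end{pmatrix},\begin{pmatrix}p_0^h(s)\\q_0^h(s)\end{pmatrix}\right\rangle+\left\langle\begin{pmatrix}p_0^f(t)\\-q_0^f(t)\end{pmatrix},\begin{pmatrix}\dot p^h(s)\\\dot q^h(s)\end{pmatrix}\right\rangle .
\]
This is the $\varepsilon$-derivative of $I(t,s)$.

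\textbf{Step 3: the transport equation and the cancellation.} Repeat the computation of $(\partial_t-\partial_s)I$. The boundary terms become the linearized ND map terms, since $\nu\cdot\dot q=0$ and $\rho_0^{-1/2}\dot p=\partial_t\dot\Lambda(\cdot)$. The source terms contribute
\[-\langle\dot\sigma\rho_0^{-1}p_0^f(t),p_0^h(s)\rangle+\langle p_0^f(t),\dot\sigma\rho_0^{-1}p_0^h(s)\rangle,\]
and the damping terms $-\sigma_0\rho_0^{-1}$ cancel as before. Because $p_0^f$, $p_0^h$ and $\dot\sigma$ are real, the two source terms vanish on the diagonal $t=s$ but not identically. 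I would check that this does not spoil the transport solution along characteristics. If it does, I will instead avoid the transport formula for these terms and compute $\dot I(T,T)$ and $\partial_s\dot I(T,T)$ directly by integrating in a single time variable.

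Either way, this step should yield a linearized version of \eqref{eq:Its} expressing $\dot I(T,T)$ through boundary integrals of $f$ against $\dot\Lambda h_t(2T-\cdot)$ and of $\dot\Lambda f_t$ against $h(2T-\cdot)$. Applying the same formula with $h$ replaced by $h_t$ (using $\partial_t u^h=u^{h_t}$) gives the analogous representation of $\partial_s\dot I(T,T)$, which accounts for the $h_{tt}$ and $h_t$ terms in \eqref{eq:control}.

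\textbf{Step 4: compute $\partial_s\dot I(T,T)+\lambda\dot I(T,T)$ from the interior side.} When $\partial_s$ falls on $(p_0^h,q_0^h)$, Step 1 shows the result cancels against $\lambda$ times the first summand of $\dot I$. When $\partial_s$ falls on $(\dot p^h,\dot q^h)$, it produces two things. The first is the source term $-\langle p_0^f(T),p_0^h(T)\rangle_{L^2(\Omega,\dot\sigma\rho_0^{-1}\dif x)}$. The second is the generator applied to $(\dot p^h,\dot q^h)(T)$, paired with $(p_0^f,-q_0^f)(T)$ and added to $\lambda$ times the second summand of $\dot I$.

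For that second piece, I integrate by parts to move the operator onto the $f$-fields. After adjointness it becomes the operator in \eqref{eq:final} applied to $(p_0^f,q_0^f)(T)$, which vanishes. One must check the sign of the $\sigma_0$ term here: the damping entry must be symmetric, which it is. The only survivor is a boundary term at time $T$. Using $\nu\cdot\dot q^h=0$ and $\nu\cdot q_0^f(T)=f(T)$, it equals $\langle f(T),\partial_t\dot\Lambda h(T)\rangle=\langle f(T),[\dot\Lambda h_t](T)\rangle$.

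Equating the interior expression with the boundary representation from Step 3 then gives \eqref{eq:control} after rearranging signs.

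\textbf{Main obstacle.} The delicate point is the bookkeeping of the non-self-adjoint damping term and the $\lambda$-terms so that the interior contributions cancel exactly. In particular, the source terms in the transport equation of Step 3 must be handled correctly, and the integration by parts at time $T$ must land on precisely the operator in \eqref{eq:final} rather than its formal adjoint. I would verify this first on the scalar $p$-component.
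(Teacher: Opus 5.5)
Your proposal is correct, and it is essentially the paper's proof with the two time variables swapped.

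\textbf{Comparison with the paper.} The paper evaluates $\partial_t\dot I(T,T)+\lambda\dot I(T,T)$. Its integration by parts, which moves the operator off $(\dot p^f,\dot q^f)$, leaves the term $-\langle[\dot\Lambda f_t](T),h(T)\rangle_{L^2(\partial\Omega)}$. Differentiating the upper limit in the linearized \eqref{eq:Its} then produces two endpoint terms. One of them cancels that leftover term, and the other becomes the first term of \eqref{eq:control}.

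You use $\partial_s$ together with $\partial_s\dot I^{f,h}=\dot I^{f,h_t}$. This produces no endpoint terms on the boundary side, and your integration by parts yields the term $-\langle f(T),[\dot\Lambda h_t](T)\rangle_{L^2(\partial\Omega)}$ directly, so your bookkeeping is slightly shorter. The two computations are equivalent, because $(\partial_t-\partial_s)\dot I(T,T)$ is exactly the difference of those two endpoint terms.

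\textbf{Two small corrections.}

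In Step 3, the source terms cancel identically in $(t,s)$, not only on the diagonal. Under the bilinear pairing, both equal $\int_\Omega\dot\sigma\rho_0^{-1}p_0^f(t)\,p_0^h(s)\,\dif x$. No reality assumption is needed for this, and in fact $p_0^f,p_0^h$ are complex when $\lambda$ is. So the transport solution goes through unchanged. More simply, \eqref{eq:Its} holds for every $\sigma$, so you can just differentiate it in $\varepsilon$, as the paper does.

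In Step 4, the surviving boundary term carries a minus sign: it is $-\langle\nu\cdot q_0^f(T),\rho_0^{-1/2}\dot p^h(T)\rangle_{L^2(\partial\Omega)}=-\langle f(T),[\dot\Lambda h_t](T)\rangle_{L^2(\partial\Omega)}$. With the plus sign you wrote, the first term of \eqref{eq:control} would come out with the wrong sign.
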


\begin{proof}
Consider the asymptotic expansion of $I(t,s)$ and $\partial_t I(t,s)$ in small $\varepsilon>0$. The first-order terms in $\varepsilon$ are
$$
    \dot{I}(t,s)\coloneqq\left\langle\begin{pmatrix}\dot{p}^f(t)\\-\dot{q}^f(t)\end{pmatrix},\begin{pmatrix}p_0^h(s)\\q_0^h(s)\end{pmatrix}\right\rangle_{L^2(\Omega)}+\left\langle\begin{pmatrix}p_0^f(t)\\-q_0^f(t)\end{pmatrix},\begin{pmatrix}\dot{p}^h(s)\\\dot{q}^h(s)\end{pmatrix}\right\rangle_{L^2(\Omega)}
$$
$$
    \begin{aligned}
    \partial_t\dot{I}(t,s)\coloneqq&\left\langle\begin{pmatrix}-\dot{\sigma}\rho_0^{-1}&0\\0&0\end{pmatrix}\begin{pmatrix}p_0^f(t)\\q_0^f(t)\end{pmatrix},\begin{pmatrix}p_0^h(s)\\q_0^h(s)\end{pmatrix}\right\rangle_{L^2(\Omega)}+\left\langle\begin{pmatrix}-\sigma_0\rho_0^{-1}&\rho_0^{-1/2}\nabla\cdot\\-\nabla[\rho_0^{-1/2}\cdot]&0\end{pmatrix}\begin{pmatrix}p_0^f(t)\\q_0^f(t)\end{pmatrix},\begin{pmatrix}\dot{p}^h(s)\\\dot{q}^h(s)\end{pmatrix}\right\rangle_{L^2(\Omega)}\\&+\left\langle\begin{pmatrix}-\sigma_0\rho_0^{-1}&\rho_0^{-1/2}\nabla\cdot\\-\nabla[\rho_0^{-1/2}\cdot]&0\end{pmatrix}\begin{pmatrix}\dot{p}^f(t)\\\dot{q}^f(t)\end{pmatrix},\begin{pmatrix}p_0^h(s)\\q_0^h(s)\end{pmatrix}\right\rangle_{L^2(\Omega)},
    \end{aligned}
$$
Apply the integration by parts to the last term to get
$$
    \begin{aligned}
        &\left\langle\begin{pmatrix}-\sigma_0\rho_0^{-1}&\rho_0^{-1/2}\nabla\cdot\\-\nabla[\rho_0^{-1/2}\cdot]&0\end{pmatrix}\begin{pmatrix}\dot{p}^f(t)\\\dot{q}^f(t)\end{pmatrix},\begin{pmatrix}p_0^h(s)\\q_0^h(s)\end{pmatrix}\right\rangle_{L^2(\Omega)} \\
        =&-\langle\dot{p}^f(t),p_0^h(s)\rangle_{L^2(\Omega,\sigma_0\rho_0^{-1}\dif x)}+\langle\rho_0^{-1/2}\nabla\cdot\dot{q}^f(t),p_0^h(s)\rangle_{L^2(\Omega)} - \langle\nabla[\rho_0^{-1/2}\dot{p}^f(t)],q_0^h(s)\rangle_{L^2(\Omega)}\\
        =&-\langle\dot{p}^f(t),p_0^h(s)\rangle_{L^2(\Omega,\sigma_0\rho_0^{-1}\dif x)}-\langle\dot{q}^f(t),\nabla[\rho_0^{-1/2}p_0^h(s)]\rangle_{L^2(\Omega)} + \langle\dot{p}^f(t),\rho_0^{-1/2}\nabla\cdot q_0^h(s)\rangle_{L^2(\Omega)}\\
        &+\langle \nu\cdot\dot{q}^f(t),\rho_0^{-1/2}p^h_0(s)\rangle_{L^2(\partial\Omega)}-\langle\rho_0^{-1/2}\dot{p}^f(t),\nu\cdot q_0^h(s)\rangle_{L^2(\partial\Omega)}\\
        =&\left\langle\begin{pmatrix}\dot{p}^f(t)\\\dot{q}^f(t)\end{pmatrix},\begin{pmatrix}-\sigma_0\rho_0^{-1}&\rho_0^{-1/2}\nabla\cdot\\-\nabla[\rho_0^{-1/2}\cdot]&0\end{pmatrix}\begin{pmatrix}p_0^h(s)\\q_0^h(s)\end{pmatrix}\right\rangle_{L^2(\Omega)}\\
        &+\langle\nu\cdot\dot{q}^f(t),\rho_0^{-1/2}p^h_0(s)\rangle_{L^2(\partial\Omega)}-\langle\rho_0^{-1/2}\dot{p}^f(t),\nu\cdot q_0^h(s)\rangle_{L^2(\partial\Omega)}.
    \end{aligned}    
$$
Since the states $(p_0^f,q_0^f)$, $(p_0^h,q_0^h)$ at $t=T$ satisfy \eqref{eq:final}, we have
\begin{align*}
    \partial_t\dot{I}(T,T) + \lambda\dot{I}(T,T) & = -\langle p_0^f(T),p_0^h(T)\rangle_{L^2(\Omega,\dot\sigma\rho_0^{-1}\dif x)} +\langle\nu\cdot\dot{q}^f(T),\rho_0^{-1/2}p^h_0(T)\rangle_{L^2(\partial\Omega)}-\langle\rho_0^{-1/2}\dot{p}^f(T), \nu\cdot q_0^h(T)\rangle_{L^2(\partial\Omega)} \\
    & = -\langle p_0^f(T),p_0^h(T)\rangle_{L^2(\Omega,\dot\sigma\rho_0^{-1}\dif x)} -\langle [\dot{\Lambda} f_t](T), h(T)\rangle_{L^2(\partial\Omega)}
\end{align*}
where the second equality follows from the facts that $\nu\cdot \dot{q}^f(T) = \partial_\nu \dot{u}^f(T)=0$ (see~\eqref{eq:wavedot}) and 
$\rho_0^{-1/2}\dot{p}^f = \partial_t \dot{\Lambda} f = \dot{\Lambda} f_t$.

On the other hand, we can use~\eqref{eq:Its} to find the first-order term of the same asymptotic expansion:
$$
\begin{aligned}
    \partial_t\dot{I}(T,T) + \lambda\dot{I}(T,T) =& \langle f(T),[\dot{\Lambda} h_t](T)\rangle_{L^2(\partial\Omega)}-\langle h(T),[\dot{\Lambda} f_t](T)\rangle_{L^2(\partial\Omega)}\\
    &+\langle f(t),[\dot{\Lambda}h_{tt}](2T-t)\rangle_{L^2((0,T)\times\partial\Omega)}-\langle[\dot{\Lambda}f_t](t),h_t(2T-t)\rangle_{L^2((0,T)\times\partial\Omega)}\\
    &+\lambda\langle f(t),[\dot{\Lambda}h_t](2T-t)\rangle_{L^2((0,T)\times\partial\Omega)}-\lambda\langle[\dot{\Lambda}f_t](t),h(2T-t)\rangle_{L^2((0,T)\times\partial\Omega)}.
    \end{aligned}
$$
Comparing these two representations of $\partial_t\dot{I}(T,T) + \lambda\dot{I}(T,T)$ gives~\eqref{eq:control}.
\end{proof}
\begin{remark}
As $\lambda\in\mathbb{C}$ is a complex number, we allow the wave solution $u$ and the Neumann boundary condition $f$ to be complex-valued functions. Accordingly, the inner product $\langle\cdot,\cdot\rangle$ is extended to complex-valued functions through complexification.
\end{remark}

\section{A Boundary Control Result} \label{sec:control}

In this section, we show that a smooth Neumann boundary control exists for smooth target data. The proof adapts the generic treatment in~\cite[Theorem 5.1]{ervedoza2010systematic}. The result generalizes~\cite[Proposition 4]{oksanen2022linearized} by controlling both $u^f_0(T)$ and $\partial_t u^f_0(T)$ with a stability estimate. Here, we view $(\overline{\Omega}, \rho_0 dx^2)$ as a smooth Riemannian manifold with boundary.

\begin{prop}\label{prop:control}
Let $\rho_0\in C^\infty(\overline{\Omega})$ be strictly positive and $\sigma_0\in C^\infty(\overline{\Omega})$ be non-negative. Suppose that all maximal geodesics on $(\overline{\Omega},g)$ have length strictly less than $T>0$. Then for any $\phi,\psi\in C^\infty(\overline{\Omega})$ there exists a Neumann data $f\in C_c^\infty((0,T]\times\partial\Omega)$ such that
$$
    u_0^f(T) = \phi,\qquad \partial_tu_0^f(T) = \psi\qquad\textup{ in }\Omega,
$$
where $u_0$ is the solution of \eqref{eq:wave0}. Moreover, there is a constant $C>0$, independent of $\phi$ and $\psi$, such that
\begin{equation}\label{eq:controlestimate}
    \|f\|_{H^2((0,T)\times\partial\Omega)}\leq C(\|\phi\|_{H^4(\Omega)}+\|\psi\|_{H^3(\Omega)})
\end{equation}
\end{prop}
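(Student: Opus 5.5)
Since the damped wave operator is not time-reversible in the usual sense, the plan is to follow the systematic approach of \cite[Theorem 5.1]{ervedoza2010systematic}: obtain exact controllability by HUM duality for the adjoint system, and obtain smoothness of the control by exploiting that the HUM control is given by a smooth cutoff in time applied to the adjoint solution.

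\textbf{Step 1: observability.} The adjoint of $\square_{\rho_0,\sigma_0}$ is $\rho_0\partial_t^2-\sigma_0\partial_t-\Delta$, i.e. the anti-damped wave equation. Consider it with homogeneous Neumann condition, whose solution $w$ is determined by final data $(w(T),\partial_tw(T))$. Under the hypothesis that every maximal geodesic of $(\overline\Omega,\rho_0dx^2)$ has length $<T$, the geometric control condition holds on $(0,T)\times\partial\Omega$. I will invoke the Bardos--Lebeau--Rauch observability inequality (its Neumann version, which is stable under the lower-order perturbation $\sigma_0\partial_t$ by a compactness--uniqueness argument together with unique continuation). This gives
\[
\|(w(T),\partial_tw(T))\|_{L^2\times H^{-1}}^2\le C\int_0^T\!\!\int_{\partial\Omega}\eta(t)\,|w|^2\,dS\,dt
\]
in the appropriate energy pairing, where $\eta\in C_c^\infty((0,T])$ is a cutoff with $\eta\equiv1$ near $t=T$. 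Keeping $\eta=1$ near $T$ is what keeps the control in $C_c^\infty((0,T]\times\partial\Omega)$. The cutoff still yields observability because the geodesic lengths are strictly less than $T$, which leaves a margin near $t=0$.

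\textbf{Step 2: HUM control.} Minimize the convex coercive functional
\[
J(w_T)=\tfrac12\int\eta|w|^2-\langle\text{target},w_T\rangle,
\]
and set $f=\eta\,w|_{\partial\Omega}$, where $w$ is the minimizer's adjoint solution. The Euler--Lagrange equation combined with the duality identity (integration by parts, as in the proof of the Blagove\v{s}\v{c}enski\u{\i} identity) shows $u_0^f(T)=\phi$ and $\partial_tu_0^f(T)=\psi$. Equivalently, the Gramian operator $\mathcal G:w_T\mapsto(u_0^{\eta w}(T),\partial_tu_0^{\eta w}(T))$ is an isomorphism, and $f=\eta\,w[\mathcal G^{-1}(\phi,\psi)]$.

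\textbf{Step 3: regularity and the estimate \eqref{eq:controlestimate} (main obstacle).} Following Ervedoza--Zuazua, I will show that $\mathcal G$ maps $D(A^k)$ isomorphically onto the corresponding compatible space of order $k$, where $A$ is the first-order generator from \eqref{eq:wavesystem}. The key point is that $\partial_t$ essentially commutes with the problem: $\sigma_0$ and $\rho_0$ are time-independent, and $\eta$ is smooth with $\eta\equiv1$ near $T$, so the commutator $[\partial_t,\eta]$ produces only terms supported in the interior of $(0,T)$. These terms are of lower order and are absorbed via the observability inequality, by induction on $k$.

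The delicate part is matching the compatibility conditions. Since $\partial_\nu u_0=f$ and $f$ need not vanish at $t=T$, the target does not have to satisfy Neumann compatibility. I therefore expect to work with the adjoint data in $D(A^k)$ and accept a loss of derivatives, which is why $H^4\times H^3$ appears on the right-hand side of \eqref{eq:controlestimate}. Taking $k$ large gives smooth $w$ up to the boundary, hence $f\in C^\infty$. The bound on $\|f\|_{H^2}$ then follows from the trace of $w$ together with the bounded inverse $\|\mathcal G^{-1}\|$ on these spaces.
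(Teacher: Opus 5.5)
\textbf{Gap in Step 3.} The Ervedoza--Zuazua mechanism you invoke relies on the time weight vanishing at \emph{both} endpoints. Integrating by parts in $t$ in $\mathcal G=\int_0^T\eta(t)e^{(T-t)A}BB^*e^{(T-t)A^*}\,dt$ gives $A\mathcal G+\mathcal G A^*=\mathcal G_{\eta'}-\eta(T)BB^*$ (taking $\eta(0)=0$). Only when $\eta(T)=0$ is the right-hand side a Gramian with weight supported inside $(0,T)$, which can then be absorbed inductively.

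You keep $\eta\equiv1$ near $T$, so the endpoint term $BB^*\Phi_T$ survives. For interior control this would be harmless, since $BB^*$ is multiplication by a cutoff. For Neumann boundary control, however, $B$ is unbounded, and $BB^*\Phi_T$ is a distribution carried by $\partial\Omega$: the Dirichlet trace of the adjoint datum, reinjected as a Neumann source. It is not in the state space, and observability says nothing about it. For the induction to close, its boundary part must cancel the boundary part produced by applying the generator to the incompatible target. That cancellation amounts to the relations $f(T)=\partial_\nu\phi$, $\partial_t f(T)=\partial_\nu\psi$, \dots{} on $\partial\Omega$, linking the unknown HUM control $f(T)=w(T)|_{\partial\Omega}$ to the target. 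These relations are exactly what smoothness of $f$ would give, so the argument is circular.

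Saying you will ``work with adjoint data in $D(A^k)$ and accept a loss of derivatives'' does not help. The adjoint datum is forced to be $\mathcal G^{-1}(\phi,\psi)$, and showing it lies in $D(A^{*k})$ is the whole point. If instead you let $\eta$ vanish near $T$ so that Ervedoza--Zuazua applies verbatim, then $f$ vanishes near $T$. The target must then satisfy all Neumann compatibility conditions ($\partial_\nu\phi=\partial_\nu\psi=0$, and so on), which general $\phi,\psi\in C^\infty(\overline\Omega)$ violate.

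\textbf{Secondary issue in Step 1.} The Neumann problem has no hidden trace regularity. For $L^2\times H^{-1}$ data, $w|_{(0,T)\times\partial\Omega}$ is in general not in $L^2$, so $J$ is not continuous on that space and HUM must be set up on a completion.

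\textbf{How the paper proceeds.} The paper sidesteps boundary-control HUM entirely:
\begin{enumerate}
\item It enlarges $\Omega$ to a compact $\mathcal K$ with $\overline\Omega\subset\operatorname{int}\mathcal K$.
\item It picks $\omega_0\subset\mathcal K\setminus\overline\Omega$ met within time $T^*<T$ by every geodesic from $S\overline\Omega$.
\item It extends $\phi,\psi$ to $C_c^\infty$ functions in the interior of $\mathcal K$, so all compatibility conditions for the Dirichlet problem on $\partial\mathcal K$ hold trivially.
\item It applies \cite[Theorem 5.1]{ervedoza2010systematic} to the \emph{interior} control $\eta\chi Y$ with $\eta\in C_c^\infty((0,T))$.
\end{enumerate}
Because $\partial\Omega$ lies in the interior of $\mathcal K$, there is no corner $\{T\}\times\partial\Omega$ at which compatibility could fail. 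The solution $v$ is smooth, and $v|_\Omega$ solves \eqref{eq:wave0} because the source is supported outside $\overline\Omega$. The trace $f=\partial_\nu v|_{\partial\Omega}$ is smooth, vanishes for small $t$, and may be nonzero near $T$. The estimate \eqref{eq:controlestimate} then follows from the trace theorem and $\|v\|_{H^4}\lesssim\|Y\|_{H^3}\lesssim\|(\phi,\psi)\|_{H^4\times H^3}$.

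To rescue your direct boundary approach you would need an additional step. For example, peel off an explicit smooth solution near $t=T$ whose Neumann data have Taylor coefficients at $T$ matched to $(\phi,\psi)$ (via Borel's lemma). Then control the resulting Neumann-compatible remainder with a weight vanishing at both ends. Even this still requires a smooth-control theorem for the unbounded Neumann control operator.
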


\begin{proof}
Similar to the proof of \cite[Proposition 4]{oksanen2022linearized}, we have the following results
\begin{enumerate}
    \item There exists a small constant $\delta>0$ such that the maximal geosedics have length less than $T^*\coloneqq T-2\delta$.
    \item We extend $\rho_0,\sigma_0$ smoothly to $\mathbb{R}^n$, there exists a compact domain $\mathcal{K}$ with smooth boundary such that $\overline{\Omega}$ is contained in the interior of $\mathcal{K}$ and with Riemannian metric extended by tensor $g=\rho_0\dif x^2$.
    \item There exists an open set $\omega_0$ such that $\overline{\omega_0}\subset\mathcal{K}\setminus\overline{\Omega}$ and that all geodesics $\gamma_{x,\xi}$ with $(x,\xi)\in S\overline{\Omega}$ intersect $\omega_0$ in time $T^*$, where $S\overline{\Omega}$ denote the unit sphere bundle over the closure of $\Omega$.
\end{enumerate}
We choose $\eta\in C_c^\infty((0,T))$ such that $0\leq\eta\leq1$, $\eta=1$ on $[\delta,T-\delta]$ and $\chi\in C_c^\infty(\mathcal{K}\setminus\overline{\Omega})$ such that $\chi=1$ on $\omega_0$. We choose extensions $\phi,\psi\in C_c^\infty(\mathcal{K})$ such that
\[\|\phi\|_{H^4(\mathcal{K})}\leq C\|\phi\|_{H^4(\Omega)},\quad\|\psi\|_{H^3(\mathcal{K})}\leq C\|\psi\|_{H^3(\Omega)},\]
where $C$ is independent of $\phi,\psi$. Following \cite[Theorem 5.1]{ervedoza2010systematic}, there exists $Y\in C^\infty((0,T)\times\mathcal{K})$ such that
\begin{equation}\label{eq:controlwave}
    \begin{cases}
        \square_{\rho_0,\sigma_0} v=\eta\chi Y,\qquad\textup{ in }(0,T)\times\mathcal{K}\\
        v|_{x\in\partial\mathcal{K}}=0,\\
        v|_{t=T}=\phi,\partial_tv|_{t=T}=\psi,\\
        v|_{t=0}=0,\partial_tv|_{t=0}=0,
    \end{cases}
\end{equation}
where $Y$ satisfies
$$
    \begin{cases}
        \square_{\rho_0,-\sigma_0} Y=0,\qquad\textup{ in }(0,T)\times\mathcal{K}\\
        Y|_{x\in\partial\mathcal{K}}=0,\\
        Y|_{t=0}=Y_0,\partial_tY|_{t=0}=Y_1,\\
    \end{cases}
$$
for a suitable initial condition $(Y_0,Y_1)$. Following the proof of \cite[Proposition 4]{oksanen2022linearized}, we also obtain the estimate
\[\|Y\|_{H^{3}((0,T)\times\mathcal{K})}\leq C\|(Y_0,Y_1)\|_{H^3(\mathcal{K})\times H^2(\mathcal{K})}\leq C\|(\phi,\psi)\|_{H^{4}(\mathcal{K})\times H^{3}(\mathcal{K})},\]
where $C$ is independent of $\phi,\psi$.

We set $f=\partial_\nu v|_{x\in\partial\Omega}$. Since $\eta$ is compact supported, $v$ satisfies vanishing initial condition and $f(t,x)=0$ when $t$ is sufficiently small, thus $v|_{(0,T)\times\Omega}$ solves~$\eqref{eq:wave0}$ with the Neumann boundary condition $f$. According to the Trace Theorem, we have
\[\|f\|_{H^2((0,T)\times\partial\Omega)}\leq C\|v\|_{H^{3+\frac{1}{2}}((0,T)\times\Omega)}\leq C\|v\|_{H^{4}((0,T)\times\mathcal{K})}.\]

Moreover, considering $v$ as solution of \eqref{eq:controlwave} without the constraint at $t=T$, the regularity estimate gives
\[\|v\|_{H^{4}((0,T)\times\mathcal{K})}\leq C\|\eta\chi Y\|_{H^{3}((0,T)\times\mathcal{K})}\leq C\|Y\|_{H^{3}((0,T)\times\mathcal{K})}.\]
We get \eqref{eq:controlestimate} by combining all the inequalities.
\end{proof}

\begin{remark} \label{thm:antideri}
Using the first-order system representation of the damped wave equation, the boundary control result implies the following: For $\Psi\in C^\infty(\overline{\Omega})$ and $\Phi\in C^\infty(\overline{\Omega};\mathbb{R}^n)$ with $\nabla\times\Phi=0$ (this condition ensures $\Phi$ is in the range of the gradient operator), the boundary control equations
$$
    p^f(T) = \Psi, \qquad q^f(T) =\Phi \qquad \text{ in } \Omega
$$
admits solutions. Indeed, let $\Pi\in C^\infty(\overline{\Omega})$ be such that $\nabla\Pi=\Phi$, the equations for $p,q$ translate to
$$
    u^f(T) = \nabla^{-1} \Phi := \Pi + C_q, \qquad
    \partial_t u^f(T) = \rho^{-1/2} p^f(T) = \rho^{-1/2} \Psi  \qquad \text{ in } \Omega
$$
which admit at least a solution $f \in C^\infty_c((0,T]\times\partial\Omega)$ by Proposition~\ref{prop:control}. 
Moreover, there is a constant $C>0$ such that
$$
        \|f\|_{H^2((0,T)\times\partial\Omega)}\leq C(\|\nabla^{-1}\Phi\|_{H^4(\Omega)}+\|\Psi\|_{H^3(\Omega)})
$$
as $\rho\in C^\infty(\overline{\Omega})$ is bounded from above and below.
Here, we abuse the notation and use $\nabla^{-1} \Phi$ to denote any smooth function whose gradient is $\Phi$. Any such function can be written as $\Pi + C_q$ for some constant $C_q$. 
Note that the boundary control solution $f=f_{C_q}$ also depends on the choice of the constant $C_q$.

\end{remark}

\section{Reconstruction and Stability} \label{sec:reconandstab}

In this section, we will derive several results concerning the reconstruction and stability of $\dot\sigma$. We will work only with $p^f_0(T), p^h_0(T)$ by eliminating $q^f_0(T), q^h_0(T)$ in conditions~\eqref{eq:final}. Specifically, let 
$p_0$ denote $p^f_0$ or $p^h_0$, and $q_0$ denote $q^f_0$ or $q^h_0$. We expand the first row in~\eqref{eq:final} to get
$$
\nabla \cdot q_0(T) = \sqrt{\rho_0} \left( \frac{\sigma_0}{\rho_0} - \lambda \right) p_0(T).
$$
We expand the second row in~\eqref{eq:final} to get
$$
\lambda q_0(T) = -\nabla\left( \frac{1}{\sqrt{\rho_0}}p_0(T) \right).
$$
Combining these equations, we see that
$$
\lambda\nabla\cdot q_0(T) = -\Delta \left( \frac{1}{\sqrt{\rho_0}} p_0(T) \right) = \lambda \sqrt{\rho_0} \left( \frac{\sigma_0}{\rho_0} - \lambda \right) p_0(T).
$$
Hence
\begin{equation} \label{eq:peq}
\frac{1}{\sqrt{\rho_0}} \Delta \left( \frac{1}{\sqrt{\rho_0}} p_0(T) \right) + \lambda \left( \frac{\sigma_0}{\rho_0} - \lambda \right) p_0(T) = 0, \qquad p_0 = p^f_0 \text{ or } p^h_0
\end{equation}

From now on, we will take the background density $\rho_0\equiv 1$ and focus on the recovery of $\dot\sigma$. The discussion is separated into two cases: (1) $\sigma_0$ is a constant. In this case, we derive an explicit reconstruction formula for $\dot{\sigma}$, from which the stability estimate and reconstruction algorithm follow. (2) $\sigma_0=\sigma_0(x)$ is spatially varying but small in the Sobolev scale. In this case, we obtain an increasing stability estimate for the determination of $\dot{\sigma}$.

\subsection{Constant $\sigma_0$} \label{sec:ConstBGDamp}

When $\rho_0\equiv 1$ and $\sigma_0$ is constant, the equation~\eqref{eq:peq} reduces to
$$
[\Delta + \lambda(\sigma_0-\lambda)] p^f_0(T) = [\Delta + \lambda(\sigma_0-\lambda)] p^h_0(T) = 0.
$$
In this case, the corresponding $q_0^f(T), q_0^h(T)$ are
\begin{equation} \label{eq:qcond}
    q^f_0(T) = -\frac{1}{\lambda} \nabla p^f_0(T), \qquad
    q^h_0(T) = -\frac{1}{\lambda} \nabla p^h_0(T).
\end{equation}
The equations for $p^f_0(T), p^h_0(T)$ can be turned into Helmholtz equations with proper choice of $\lambda\in\mathbb{C}$. Indeed, let $k\geq 0$ be an arbitrary non-negative real number, and choose $\lambda$ so that $\lambda(\sigma_0-\lambda \rho_0)=k^2$, that is,
\[\lambda=
\begin{cases}
\frac{\sigma_0 + i \sqrt{4 k^2-\sigma_0^2} }{2\rho_0},

& \text{ if }k>\frac{\sigma_0}{2 },\\
\frac{\sigma_0 + \sqrt{\sigma_0^2 - 4 k^2} }{2},

& \text{ if }0\leq k\leq\frac{\sigma_0}{2}.
\end{cases}
\] 
The resulting Helmholtz equations are:
\[[\Delta+k^2]p^f_0(T)=[\Delta+k^2]p^h_0(T)=0.\]

Recall that for any $\theta\in\mathbb{S}^{n-1}$ and any $k\geq 0$, the function $\phi_\theta(x):=\exp(ik\theta\cdot x)$ is a Helmholtz solution. 
According to Proposition~\ref{prop:control}, there exist Neumann boundary conditions $f,h\in C_c^\infty((0,T]\times\partial\Omega)$ such that
\begin{equation}\label{eq:helmholtzsol}
    p_0^f(T) = p_0^h(T) = \exp(ik\theta\cdot x).
\end{equation}
In this case,
$$
q_0^f(T) = q_0^h(T) = 
-\frac{1}{\lambda} \nabla p^f_0(T) =-\frac{ik}{\lambda}\exp(ik\theta\cdot x)\cdot\theta.
$$

\begin{remark}
When $\lambda=k=\sigma_0=0$, we choose $q_0(T)$ to be $-\theta$.
\end{remark}

With this choice of $\lambda$, we can calculate the Fourier transform $\hat{\dot\sigma}$ as follows.

\begin{prop} \label{thm:reconformula}
Suppose $\rho_0\equiv 1$ and $\sigma_0$ is constant. The Fourier transform of $\dot{\sigma}$ can be constructed as follows:
\begin{equation}\label{eq:reconstruction1d}
    \begin{aligned}
    \hat{\dot\sigma}(2k\theta)  =& -\langle f(T),[\dot{\Lambda} h_t](T)\rangle_{L^2(\partial\Omega)}\\
    &-\langle f(t),[\dot{\Lambda}h_{tt}](2T-t)\rangle_{L^2((0,T)\times\partial\Omega)}+\langle[\dot{\Lambda}f_t](t),h_t(2T-t)\rangle_{L^2((0,T)\times\partial\Omega)}\\
    &-\lambda\langle f(t),[\dot{\Lambda}h_t](2T-t)\rangle_{L^2((0,T)\times\partial\Omega)}+\lambda\langle[\dot{\Lambda}f_t](t),h(2T-t)\rangle_{L^2((0,T)\times\partial\Omega)},
    \end{aligned}
\end{equation}
where $f,h\in C_c^\infty((0,T]\times\partial\Omega)$ are solutions of the equation \eqref{eq:helmholtzsol}.
\end{prop}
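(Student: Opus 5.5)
The plan is to obtain \eqref{eq:reconstruction1d} as a special case of the linearized Blagove\u{s}\u{c}enski\u{\i} identity, Proposition~\ref{thm:linearizedBid}. I would plug in the chosen $\lambda$ and the controls $f,h$ that produce the plane-wave states \eqref{eq:helmholtzsol}. The right-hand side of \eqref{eq:control} is then literally the right-hand side of \eqref{eq:reconstruction1d}. It remains to check two things: that the hypothesis \eqref{eq:final} holds, and that the left-hand side $\langle p_0^f(T),p_0^h(T)\rangle_{L^2(\Omega,\dot\sigma\rho_0^{-1}\dif x)}$ equals $\hat{\dot\sigma}(2k\theta)$.

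\textbf{Step 1: existence of the controls.} By Remark~\ref{thm:antideri}, given $\Psi=e^{ik\theta\cdot x}$ and the curl-free field $\Phi=-\frac{ik}{\lambda}e^{ik\theta\cdot x}\theta=\nabla\bigl(-\tfrac1\lambda e^{ik\theta\cdot x}\bigr)$, there is $f\in C_c^\infty((0,T]\times\partial\Omega)$ with $p_0^f(T)=\Psi$ and $q_0^f(T)=\Phi$. This is done by prescribing $u_0^f(T)=-\frac1\lambda e^{ik\theta\cdot x}$ and $\partial_tu_0^f(T)=e^{ik\theta\cdot x}$ in Proposition~\ref{prop:control}, applied to real and imaginary parts separately. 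The same construction gives $h$. In the degenerate case $\lambda=k=\sigma_0=0$ I would instead take $p_0(T)=1$ and $q_0(T)=-\theta$, i.e.\ $u_0(T)=-\theta\cdot x$ and $\partial_t u_0(T)=1$.

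\textbf{Step 2: verification of \eqref{eq:final}.} With $\rho_0=1$, write $p=p_0(T)$ and $q=q_0(T)=-\lambda^{-1}\nabla p$.
\begin{itemize}
\item The second row reads $-\nabla p-\lambda q=0$, which holds by the choice of $q$.
\item The first row reads
\[
-\sigma_0p+\nabla\cdot q+\lambda p=-\lambda^{-1}\bigl[\Delta p+\lambda(\sigma_0-\lambda)p\bigr]=-\lambda^{-1}(\Delta+k^2)p=0,
\]
using $\lambda(\sigma_0-\lambda)=k^2$ and the fact that $e^{ik\theta\cdot x}$ solves the Helmholtz equation.
\item In the degenerate case both rows vanish trivially, since $\nabla\cdot\theta=0$ and $\nabla 1=0$.
\end{itemize}

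\textbf{Step 3: the left-hand side.} Following the Remark after Proposition~\ref{thm:linearizedBid}, the pairing is the complexification of the real inner product. I take this to be the bilinear extension, without complex conjugation. Then
\[
\langle p_0^f(T),p_0^h(T)\rangle_{L^2(\Omega,\dot\sigma\,\dif x)}=\int_\Omega\dot\sigma(x)e^{2ik\theta\cdot x}\,\dif x=\hat{\dot\sigma}(2k\theta),
\]
with $\dot\sigma$ extended by zero outside $\Omega$ and the Fourier convention $\hat g(\xi)=\int g(x)e^{i\xi\cdot x}\dif x$. With the opposite sign convention one would take $\theta\mapsto-\theta$ instead.

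\textbf{Main obstacle.} The step I expect to be most delicate is justifying that Proposition~\ref{thm:linearizedBid} remains valid for complex $\lambda$ and complex-valued data when the pairing is bilinear rather than sesquilinear. I would handle this by rechecking the proof of Proposition~\ref{thm:linearizedBid}. Every step there — integration by parts, the transport equation for $I(t,s)$, and the commuting of $\partial_t$ with $\Lambda$ and $\dot\Lambda$ — is purely algebraic in the bilinear form. Since $\rho_0,\sigma_0,\dot\sigma$ and the operators are real, everything extends verbatim by bilinearity, and \eqref{eq:control} holds as a complex identity. This is also why a sesquilinear extension would fail: it would yield $\int\dot\sigma|p|^2$ rather than a Fourier coefficient.
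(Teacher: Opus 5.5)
Your proposal is correct and follows the same route as the paper, which simply inserts the plane-wave states \eqref{eq:helmholtzsol} into the linearized identity \eqref{eq:control}. Your additional checks make explicit what the paper leaves implicit: that the chosen $\lambda$ makes \eqref{eq:final} hold, that the pairing must be the bilinear (unconjugated) complexification for the left-hand side to become a Fourier coefficient, and the sign convention. On the last point, the paper's later convention $e^{-i\xi\cdot x}$ gives $\hat{\dot\sigma}(-2k\theta)$, which is harmless since $\theta$ ranges over the whole sphere.
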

\begin{proof}
Inserting~\eqref{eq:helmholtzsol} into~\eqref{eq:control} yields the identity, which recovers the Fourier transform of $\dot{\sigma}$ everywhere since $k\geq0$ and $\theta\in\mathbb{S}^{n-1}$ are arbitrary.
\end{proof}

\begin{prop} \label{thm:stab}
    The reconstruction~\eqref{eq:reconstruction1d} satisfies the following pointwise stability estimate in the Fourier domain:
    $$
    | \hat{\dot\sigma}(2k\theta) | \leq C_\lambda\|\dot{\Lambda}\|_{H^2_{00}((0,T)\times\partial\Omega)\to H^2((0,T)\times\partial\Omega)} 
    $$
    for some constant $C_\lambda>0$ that depends on $\lambda$ but not $\dot{\sigma}$.
\end{prop}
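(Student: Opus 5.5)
The plan is to bound each of the five terms on the right-hand side of \eqref{eq:reconstruction1d} separately. In each term $\dot\Lambda$ acts on $f,h$ or their time derivatives, so the operator norm of $\dot\Lambda$ will factor out, multiplied by Sobolev norms of $f$ and $h$. Those norms are then controlled, through Proposition~\ref{prop:control} (in the form of Remark~\ref{thm:antideri}), by norms of the target data $e^{ik\theta\cdot x}$ on the bounded domain $\Omega$. These depend only on $k$, i.e.\ on $\lambda$, and not on $\dot\sigma$.

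First I fix the controls. With $p_0^f(T)=p_0^h(T)=e^{ik\theta\cdot x}$ and $\rho_0\equiv 1$, we need $u_0(T)=\phi$ and $\partial_t u_0(T)=\psi$. Here $\psi = e^{ik\theta\cdot x}$, and $\phi$ is an antiderivative of $q_0(T) = -\frac{ik}{\lambda}e^{ik\theta\cdot x}\theta$. I take $\phi=-\frac{1}{\lambda}e^{ik\theta\cdot x}$, choosing $C_q=0$, with the degenerate case $\lambda=0$ handled as in the remark. Proposition~\ref{prop:control} then gives $f,h\in C_c^\infty((0,T]\times\partial\Omega)$ with
\[\|f\|_{H^2}+\|h\|_{H^2}\le C\big(|\lambda|^{-1}\|e^{ik\theta\cdot x}\|_{H^4(\Omega)}+\|e^{ik\theta\cdot x}\|_{H^3(\Omega)}\big)\le C(1+|\lambda|^{-1})(1+k)^4|\Omega|^{1/2}=:C'_\lambda,\]
using $|\lambda|^2=k^2$ when $k>\sigma_0/2$, and so on.

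Next I estimate the terms, extending $f,h$ by zero past $T$ as elements of $H^2_{00}$. This is legitimate because they vanish near $t=0$, and the identity was derived on $(0,2T)$. The three terms
\[\langle \dot\Lambda f_t,h_t(2T-\cdot)\rangle,\qquad \lambda\langle f,\dot\Lambda h_t(2T-\cdot)\rangle,\qquad \lambda\langle \dot\Lambda f_t,h(2T-\cdot)\rangle\]
are bounded by Cauchy--Schwarz in $L^2$. Since $\|\dot\Lambda g\|_{L^2}\le\|\dot\Lambda g\|_{H^2}$, for example
\[\|\dot\Lambda f_t\|_{L^2}\le\|\dot\Lambda\|\,\|f_t\|_{H^2}.\]
The term $\langle f,\dot\Lambda h_{tt}(2T-\cdot)\rangle$ is treated by moving time derivatives. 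Since $\dot\Lambda$ commutes with $\partial_t$, integration by parts in $t$ transfers derivatives onto $f$, at the cost of boundary terms at $t=T$. These boundary terms are pointwise-in-time traces of the same type as the remaining term $\langle f(T),\dot\Lambda h_t(T)\rangle_{L^2(\partial\Omega)}$, and that term is bounded by the trace inequality $\|g(T)\|_{L^2(\partial\Omega)}\le C\|g\|_{H^1((0,2T)\times\partial\Omega)}$. Collecting everything yields
\[|\hat{\dot\sigma}(2k\theta)|\le C(1+|\lambda|)\,\|\dot\Lambda\|\,\big(\|f\|_{H^4}+\|h\|_{H^4}\big)^2,\]
where the right-hand side uses whatever Sobolev order of $f,h$ is actually needed.

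The main obstacle is the derivative count. Terms such as $\dot\Lambda h_{tt}$ need $h_{tt}\in H^2_{00}$, i.e.\ $h\in H^4$, whereas \eqref{eq:controlestimate} only controls $\|f\|_{H^2}$. I would resolve this by rerunning the proof of Proposition~\ref{prop:control} at higher regularity, which gives $\|f\|_{H^m}\le C(\|\phi\|_{H^{m+2}}+\|\psi\|_{H^{m+1}})$ for every $m$; the proof adapts verbatim because $Y$ is smooth and the wave regularity estimates hold at any order. Alternatively, one can shift derivatives between the two factors so that only $\|f\|_{H^2}$, $\|h\|_{H^2}$ and $\|\dot\Lambda\|_{H^2_{00}\to H^2}$ appear. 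Either way the constant depends only on $k$, and hence on $\lambda$, $T$ and $\Omega$, and not on $\dot\sigma$, which proves the statement.
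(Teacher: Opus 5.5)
Your argument proves the statement as posed, and its skeleton is the paper's: Cauchy--Schwarz term by term, factor out the operator norm of $\dot\Lambda$, then bound the controls through Proposition~\ref{prop:control}. The difference is where you put the time derivatives, and that choice is what creates your ``main obstacle''.

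You put the derivatives on the input of $\dot\Lambda$, e.g.\ $\|\dot\Lambda h_{tt}\|_{L^2}\le\|\dot\Lambda\|\,\|h_{tt}\|_{H^2}$. This costs $\|h\|_{H^4}$ and $\|f\|_{H^3}$. The paper instead uses that $\dot\Lambda$ commutes with $\partial_t$ and puts the derivatives on the output:
\[\|\dot\Lambda h_{tt}\|_{L^2}=\|\partial_t^2\dot\Lambda h\|_{L^2}\le\|\dot\Lambda h\|_{H^2}\le\|\dot\Lambda\|\,\|h\|_{H^2}.\]
In the same way it uses $\|\dot\Lambda f_t\|_{L^2}\le\|\dot\Lambda f\|_{H^2}$ and $\|[\dot\Lambda h_t](T)\|_{L^2(\partial\Omega)}\le C\|\dot\Lambda h\|_{H^2}$. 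Only $\|f\|_{H^2}$ and $\|h\|_{H^2}$ then appear, giving $C_\lambda=(3+2|\lambda|)\|f\|_{H^2}\|h\|_{H^2}$, which \eqref{eq:controlestimate} bounds directly.

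Your version is still valid for the bare claim, because the controls are smooth and independent of $\dot\sigma$. But making $C_\lambda$ explicit in $k$ and uniform in $\theta$ then needs an $H^m$ analogue of \eqref{eq:controlestimate}. That analogue is plausible but not established in the paper, whereas the paper's bookkeeping needs nothing new. Your ``alternative'' of shifting derivatives ``between the two factors'' should really read: shift them past $\dot\Lambda$ onto its output. Stated that way, it is the paper's proof.

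The integration by parts for the $h_{tt}$ term is unnecessary, but it is sharper than you noticed. The boundary term at $t=T$ is exactly $+\langle f(T),[\dot\Lambda h_t](T)\rangle_{L^2(\partial\Omega)}$, which cancels the first term of \eqref{eq:reconstruction1d}. What remains is $-\langle f_t,[\dot\Lambda h_t](2T-\cdot)\rangle_{L^2((0,T)\times\partial\Omega)}$, with no pointwise-in-time traces at all.

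One step is wrong as written: you cannot extend $f,h$ by zero past $T$. Vanishing near $t=0$ is what the $H^2_{00}$ condition asks for, but the controls need not vanish at $t=T$. For $k>0$,
\[f(T)=\partial_\nu u_0^f(T)=\partial_\nu\phi=-\tfrac{ik}{\lambda}(\theta\cdot\nu)e^{ik\theta\cdot x}\not\equiv 0 \text{ on } \partial\Omega,\]
and likewise for $h$. So the zero extension has a jump, and $h_t$, $h_{tt}$ are not even in $L^2$. Extend smoothly instead, e.g.\ by continuing the controlled solution past $T$. The left-hand side of \eqref{eq:control} depends only on $f,h$ on $(0,T)$, so the choice of smooth extension does not affect the identity.
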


\begin{proof}
We introduce the time-reversal operator $\mathcal{R}$ defined as
$$
\mathcal{R} f(t,x) := f(2T-t,x), \qquad t\in (0,T), \quad x\in \partial\Omega.
$$
It is clear that $\mathcal{R}$ is bounded on $H^s((0,T)\times\partial\Omega)$ for any $s\geq 0$.
We estimate using~\eqref{eq:reconstruction1d} and the Cauchy-Schwarz inequality to get
$$
    \begin{aligned}
    |\hat{\dot\sigma}(2k\theta)|
    \leq&\|f\|_{L^2((0,T)\times\partial\Omega)}\|\mathcal{R}\dot{\Lambda}h_{tt}\|_{L^2((0,T)\times\partial\Omega)}+\|\dot{\Lambda}f_t\|_{L^2((0,T)\times\partial\Omega)}\|\mathcal{R} h_t\|_{L^2((0,T)\times\partial\Omega)}\\
    &+|\lambda|\|f\|_{L^2((0,T)\times\partial\Omega)}\|\mathcal{R}\dot{\Lambda}h_t\|_{L^2((0,T)\times\partial\Omega)}+|\lambda|\|\dot{\Lambda}f_t\|_{L^2((0,T)\times\partial\Omega)}\|\mathcal{R}h\|_{L^2((0,T)\times\partial\Omega)}\\
    &+\|f(T)\|_{L^2(\partial\Omega)}\|\dot{\Lambda} h_t](T)\|_{L^2(\partial\Omega)}\\
    \leq&(1+|\lambda|)\|f\|_{L^2((0,T)\times\partial\Omega)}\|\mathcal{R}\dot{\Lambda}h\|_{H^2((0,T)\times\partial\Omega)}+(1+|\lambda|)\|\dot{\Lambda}f_t\|_{L^2((0,T)\times\partial\Omega)}\|\mathcal{R}h\|_{H^1((0,T)\times\partial\Omega)}\\
    &+\|f\|_{H^1((0,T)\times\partial\Omega)}\|\dot{\Lambda} h\|_{H^2((0,T)\times\partial\Omega)}\\
    \leq&(2+|\lambda|)\|f\|_{H^1((0,T)\times\partial\Omega)}\|\dot{\Lambda}h\|_{H^2((0,T)\times\partial\Omega)}+(1+|\lambda|)\|\dot{\Lambda}f\|_{H^2((0,T)\times\partial\Omega)}\|h\|_{H^1((0,T)\times\partial\Omega)}.
    \end{aligned}
$$
As $f,h\in C_c^\infty((0,T]\times\partial\Omega)$ in~\eqref{eq:reconstruction1d}, Proposition~\ref{thm:Lambdadotboundedness} implies
$$
\begin{aligned}
    |\hat{\dot\sigma}(2k\theta)|
    \leq&(2+|\lambda|)\|f\|_{H^1((0,T)\times\partial\Omega)}\|\dot{\Lambda}\|_{H_{00}^2((0,T)\times\partial\Omega)\to H^2((0,T)\times\partial\Omega)}\|h\|_{H^2((0,T)\times\partial\Omega)}\\
    &+(1+|\lambda|)\|\dot{\Lambda}\|_{H_{00}^2((0,T)\times\partial\Omega)\to H^2((0,T)\times\partial\Omega)}\|f\|_{H^2((0,T)\times\partial\Omega)}\|h\|_{H^1((0,T)\times\partial\Omega)}\\
    \leq& C_\lambda\|\dot{\Lambda}\|_{H_{00}^2((0,T)\times\partial\Omega)\to H^2((0,T)\times\partial\Omega)}
    \end{aligned}
$$
where
\begin{equation} \label{eq:Clambda}
    \begin{aligned}
    C_\lambda&\coloneqq (3+2|\lambda|)\|f\|_{H^2((0,2T)\times\partial\Omega)}\|h\|_{H^2((0,2T)\times\partial\Omega)}\\
    &\leq C(3+2|\lambda|)(\|\exp(ik\theta\cdot x)\|_{H^3(\Omega)}+\|\nabla^{-1}\frac{ik}{\lambda}\exp(ik\theta\cdot x)\cdot\theta\|_{H^4(\Omega)})^2\\
    &\leq C(3+2|\lambda|)(|k|^3+\frac{|k|^4}{|\lambda|}+C_q)^2,
    \end{aligned}    
\end{equation}

and $C_q$ is a constant generated from the anti-derivative $\nabla^{-1}$, see Remark~\ref{thm:antideri}.
\end{proof}

\begin{remark}
For complex-valued $f,h$, the inner product $\langle f,g\rangle$ can be viewed as the Hermitian product between $f$ and $\Bar{g}$, so the Cauchy-Schwarz inequality
\[|\langle f,g\rangle|\leq\|f\|_2\|g\|_2\]
remains valid, where the 2-norms are induced by the Hermitian product.
\end{remark}

\subsection{Non-constant $\sigma_0$}

When $\sigma_0=\sigma_0(x)$ is spatially-varying, the equation~\eqref{eq:peq} is no longer the Helmholtz equation. Nonetheless, if we choose $\lambda = -ik$ with $k\geq 0$ an arbitrary non-negative constant, the equation~\eqref{eq:peq} becomes the following Schr\"odinger equation:
\begin{equation}\label{eq:perturbedHelmholtz}
[-\Delta - k^2 + ik\sigma_0] p_0^f(T) = [-\Delta - k^2  + ik\sigma_0] p_0^h(T) = 0.
\end{equation}

In this case, we will employ another class of solutions known as the Complex Geometric Optics solutions (\textit{CGO solutions})~\cite{sylvester1987global}. They are functions of the form
$$
\phi(x) = e^{i\zeta\cdot x}(1+r(x)).
$$
where $\zeta\in\mathbb{C}^n$ satisfies $\zeta\cdot\zeta=k^2$ and $r(x)$ satisfies
$$
    -\Delta r - 2i\zeta\cdot\nabla r + ik\sigma_0(1+r) = 0.
$$
This equation for $r$ ensures that $\phi$ satisfies the Schr\"odinger equation~\eqref{eq:perturbedHelmholtz}. Moreover, the next result, proved in~\cite[Proposition 3.2]{isakov2016increasing}, shows that $r$ decays in the Sobolev spaces as $|\zeta|\rightarrow\infty$.

\begin{prop}[{\cite[Proposition 3.2]{isakov2016increasing}}]\label{prop:CGOregularity}
Let $\zeta\in\mathbb{C}^n$ $(n\geq 3)$ satisfy $\zeta\cdot\zeta=k^2$. Let $s>\frac{n}{2}$. 
There exist positive constants $C_0$, $C_1$, depending on $s$ and $\Omega$, such that if $C_0k\|\sigma_0\|_{H^s(\Omega)}\leq |\zeta|$, then 
$$
    \phi(x) = e^{i\zeta\cdot x}(1+r(x))
$$
satisfies the Sch\"odinger equation~\eqref{eq:perturbedHelmholtz} with the estimate
\begin{equation} \label{eq:restimate}
    \|r\|_{H^s(\Omega)}\leq \frac{C_1}{|\zeta|}\|ik\sigma_0\|_{H^s(\Omega)}
\end{equation}
\end{prop}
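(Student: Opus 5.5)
The plan is to build $r$ by a contraction argument. The solution operator for the conjugated Laplacian in the Sylvester--Uhlmann/H\"ahner framework will be inverted, and the zeroth-order term $ik\sigma_0$ will be treated as a small perturbation.

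First, plugging $\phi=e^{i\zeta\cdot x}(1+r)$ into \eqref{eq:perturbedHelmholtz} and using $\zeta\cdot\zeta=k^2$ gives
\[
-\Delta r-2i\zeta\cdot\nabla r=-ik\sigma_0(1+r).
\]
Here $\sigma_0$ is extended to a compactly supported $H^s(\mathbb{R}^n)$ function, with norm controlled by $\|\sigma_0\|_{H^s(\Omega)}$. Next I invoke the standard right inverse $G_\zeta$ of $P_\zeta:=-\Delta-2i\zeta\cdot\nabla$. I take the version on periodic Sobolev spaces on a large cube $Q\supset\overline\Omega$ (H\"ahner's construction), with the Fourier lattice shifted so the symbol $|\xi|^2+2\zeta\cdot\xi$ stays away from $0$. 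Its key estimate is
\[
\|G_\zeta g\|_{H^s(Q)}\le \frac{C}{|\zeta|}\|g\|_{H^s(Q)},
\]
uniformly for $|\zeta|\ge 1$ and for every $s\ge0$. This holds because $G_\zeta$ is a Fourier multiplier, so the $L^2$ bound passes to each $H^s$ level with the same constant.

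I then look for $r$ solving the fixed-point equation
\[
r=-G_\zeta\bigl(ik\sigma_0(1+r)\bigr)=:\mathcal{T}r.
\]
Since $s>n/2$, $H^s(Q)$ is a Banach algebra, so
\[
\|\sigma_0 r\|_{H^s}\le C_{\mathrm{alg}}\|\sigma_0\|_{H^s}\|r\|_{H^s}.
\]
Therefore $\mathcal{T}$ is a contraction with constant $CC_{\mathrm{alg}}k\|\sigma_0\|_{H^s}/|\zeta|$. Choosing $C_0=2CC_{\mathrm{alg}}$ makes this at most $1/2$ under the hypothesis $C_0k\|\sigma_0\|_{H^s}\le|\zeta|$. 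Banach's fixed point theorem gives a unique $r$, and
\[
\|r\|\le \|\mathcal{T}0\|+\tfrac12\|r\|,
\]
so
\[
\|r\|_{H^s}\le 2\|G_\zeta(ik\sigma_0)\|_{H^s}\le \frac{2C}{|\zeta|}\|ik\sigma_0\|_{H^s}.
\]
This is \eqref{eq:restimate} with $C_1=2C$ times the extension constant. Restricting to $\Omega$ finishes the argument.

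The main obstacle is the uniform bound $\|G_\zeta\|_{H^s\to H^s}\le C/|\zeta|$. For complex $\zeta$ with $\zeta\cdot\zeta=k^2$ real, write $\zeta=a+ib$ with $a\cdot b=0$ and $|a|^2-|b|^2=k^2$. The symbol is then $|\xi|^2+2a\cdot\xi+2ib\cdot\xi$, and its imaginary part alone need not control it. I would follow H\"ahner's trick: rotate so that $b$ lies along a coordinate axis, shift the lattice by half a period in that direction, and bound $|\xi|^2+2\zeta\cdot\xi$ from below by a multiple of $|\zeta|$ on the shifted lattice. This requires $|b|\gtrsim|\zeta|$. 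If $|b|$ is small relative to $|\zeta|$, I would instead shift along the direction of the larger of $a$ and $b$, treating real and imaginary parts separately. Since the statement is cited from \cite[Proposition 3.2]{isakov2016increasing}, I would defer to that source for this estimate and present only the contraction argument in detail.
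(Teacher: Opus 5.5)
The contraction skeleton is sound: the Banach-algebra bound, choosing $C_0$ so that $\mathcal T$ is a $\tfrac12$-contraction, and $\|r\|\le 2\|\mathcal T0\|$. The paper gives no argument here, only the citation. But the whole content of the statement lies in the bound $\|G_\zeta\|_{H^s\to H^s}\le C/|\zeta|$, and the periodic construction you propose cannot give it.

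On the torus, $G_\zeta$ is diagonal in the basis $e^{i\gamma\cdot x}$, $\gamma\in\Gamma$. So its norm on $L^2$, and on every $H^s$, is exactly $\bigl(\min_{\gamma\in\Gamma}|p_\zeta(\gamma)|\bigr)^{-1}$, where $p_\zeta(\xi)=|\xi|^2+2\zeta\cdot\xi$. Write $\zeta=a+ib$ as you do, so $|a|^2=k^2+|b|^2$ and $|\zeta|^2=k^2+2|b|^2$. Normalize the cube so that $\Gamma$ is a shift of $\mathbb{Z}^n$; other side lengths only change constants.

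\begin{itemize}
\item With H\"ahner's half-shift along $\hat b$, the point $\gamma=\frac12\hat b\in\Gamma$ has $p_\zeta(\gamma)=\frac14+i|b|$. Hence $\|G_\zeta\|\ge(\frac14+|b|)^{-1}$: the method gives $C/|\mathrm{Im}\,\zeta|$, not $C/|\zeta|$.
\item Your fallback of shifting along $\hat a$ also fails. Take $\hat a=e_1$, $\hat b=e_2$, $\Gamma=\mathbb{Z}^n+\frac12e_1$. The points $\gamma=-\frac12e_1+me_3$ (available because $n\ge3$) give $p_\zeta(\gamma)=m^2+\frac14-|a|$. This is $O(|a|^{1/2})$ for a suitable integer $m$, so $\|G_\zeta\|\gtrsim|\zeta|^{-1/2}$.
\end{itemize}
So the uniform bound you assert is false, not merely unproved, whenever $|\mathrm{Im}\,\zeta|\ll|\zeta|$.

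That is exactly the regime the paper needs. There $\mathrm{Im}\,\zeta(j)=\pm\frac{R}{\sqrt2}e(1)$ with $R$ fixed (e.g.\ $R=R_0$) while $k\to\infty$, so $|\mathrm{Im}\,\zeta(j)|\ll|\zeta(j)|=\sqrt{R^2+k^2}$. Your argument would only yield $\|r\|_{H^s}\le\frac{C}{|\mathrm{Im}\,\zeta|}k\|\sigma_0\|_{H^s}$, under the hypothesis $C_0k\|\sigma_0\|_{H^s}\le|\mathrm{Im}\,\zeta|$. That hypothesis fails for $k\gg R$, and the factor $k/\sqrt{R^2+k^2}$ in Lemma~\ref{thm:pointwiseest} cannot be obtained this way.

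To close the gap you need a right inverse of $P_\zeta$ whose norm is $O(|\zeta|^{-1})$ uniformly in $|\mathrm{Im}\,\zeta|$. It suffices to have this from functions supported in a fixed ball to their restrictions to that ball: extend $\sigma_0$ with compact support and run your fixed point on that ball. Such a bound must exploit the real part of the symbol, whose gradient on the characteristic set has length $2|\mathrm{Re}\,\zeta|\ge\sqrt2|\zeta|$. One way is through Agmon--H\"ormander/limiting-absorption type estimates for Faddeev's Green operator on $\mathbb{R}^n$. A lattice shift, which only uses the imaginary part, cannot supply this. Deferring the step to the cited source is legitimate only if that source actually provides an estimate of this kind.
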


From now on, we assume $n\geq 3$ in order to construct CGO solutions.
Let $\xi\in\mathbb{R}^{n}$ be an arbitrary vector. Choose another two unit vectors $e(1),e(2)\in\mathbb{S}^{n-1}$ such that $\{\xi,e(1),e(2)\}$ forms an orthogonal set. 
Let $R>0$ be an arbitrary positive number with $k^2+\frac{R^2}{2}\geq\frac{|\xi|^2}{4}$. We will eventually take $R$ to be sufficiently large. Introduce two complex vectors
\begin{align*}
    \zeta(1)&:=-\frac{1}{2}\xi + i\frac{R}{\sqrt{2}}e(1) + \sqrt{k^2+\frac{R^2}{2}-\frac{|\xi|^2}{4}}e(2),\\
    \zeta(2)&:=-\frac{1}{2}\xi - i\frac{R}{\sqrt{2}}e(1) - \sqrt{k^2+\frac{R^2}{2}-\frac{|\xi|^2}{4}}e(2).
\end{align*}
Note that the construction of $\zeta_1, \zeta_2$ fulfills the following conditions: 
\[\zeta(1)+\zeta(2)=-\xi,\quad\zeta(j)\cdot\zeta(j)=k^2,\quad|\zeta(j)|^2=R^2+k^2,\quad\textup{for }j=1,2.\]

Note that if 
$$
\|\sigma_0\|_{H^s(\Omega)}\leq\frac{1}{C_0}
$$
then we automatically have

\[|\zeta(j)|\geq k\geq C_0k\|\sigma_0\|_{H^s(\Omega)},\]
Proposition~\ref{prop:CGOregularity} asserts the existence of CGO solutions
\[\phi_j(x)=e^{i\zeta(j)\cdot x}(1+r_j(x)),\quad \text{ with } \|r_j\|_{H^s(\Omega)}\leq \frac{C_1}{|\zeta(j)|}\|ik\sigma_0\|_{H^s(\Omega)}\leq\frac{C_1}{C_0}.\]

The following lemma gives an upper bound for the $H^s$-norm of the CGO solution $\phi_j$.

\begin{lemma} \label{thm:CGOHs}
    Suppose $s>\frac{n}{2}$ $(n\geq 3)$ and $\|\sigma_0\|_{H^s(\Omega)}\leq\frac{1}{C_0}$. With the choice of $\zeta(j)$ ($j=1,2$) as above, we have
        $$
    \|\phi_j\|_{H^s(\Omega)} \leq C \left(|\Omega|^{\frac{1}{2}}+\frac{C_1}{C_0}\right) (R^2+k^2)^{\frac{s}{2}} e^{\frac{R}{\sqrt 2}}
    $$
    where $C_0, C_1$ are the constants in Proposition~\ref{prop:CGOregularity} and $|\Omega|$ denotes the Lebesgue measure of $\Omega$ in $\mathbb{R}^n$.
\end{lemma}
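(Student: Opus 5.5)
The bound will follow from splitting $\phi_j = e^{i\zeta(j)\cdot x} + e^{i\zeta(j)\cdot x} r_j$ and treating the two pieces separately:
\[
\|\phi_j\|_{H^s(\Omega)} \leq \|e^{i\zeta(j)\cdot x}\|_{H^s(\Omega)} + \|e^{i\zeta(j)\cdot x} r_j\|_{H^s(\Omega)}.
\]

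\emph{Pointwise size of the exponential.} Write $\zeta(j) = \Re\zeta(j) + i\Im\zeta(j)$. By construction $\Im\zeta(j) = \pm\frac{R}{\sqrt2}e(1)$, so
\[
|e^{i\zeta(j)\cdot x}| = e^{-\Im\zeta(j)\cdot x} = e^{\mp\frac{R}{\sqrt2}e(1)\cdot x}.
\]
After translating coordinates and rescaling, I would assume $\Omega$ sits in the unit ball, so $|e(1)\cdot x|\le 1$ and the modulus is at most $e^{R/\sqrt2}$. Any constant coming from the diameter of $\Omega$ would otherwise have to be absorbed into $C$ (or into the exponent); in the paper's normalization I expect $\Omega\subset B(0,1)$ to be tacitly assumed.

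\emph{The free exponential.} For each multi-index $|\alpha|\le s$ (integer $s$ first),
\[
\partial^\alpha e^{i\zeta\cdot x} = (i\zeta)^\alpha e^{i\zeta\cdot x}, \qquad |(i\zeta)^\alpha| \le |\zeta|^{|\alpha|}.
\]
Hence
\[
\|\partial^\alpha e^{i\zeta\cdot x}\|_{L^2(\Omega)} \le |\zeta|^{|\alpha|}\, e^{R/\sqrt2}\, |\Omega|^{1/2}.
\]
Summing over $|\alpha|\le s$ and using $|\zeta(j)|^2 = R^2+k^2$ gives $C(1+R^2+k^2)^{s/2}e^{R/\sqrt2}|\Omega|^{1/2}$. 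Non-integer $s$ follows by interpolation.

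\emph{The product with $r_j$.} Since $s>n/2$, $H^s(\Omega)$ is an algebra, but the multiplier $e^{i\zeta\cdot x}$ is only smooth, not in $H^s$ with a good constant. So I would use the Leibniz rule directly:
\[
\|\partial^\alpha (e^{i\zeta\cdot x} r_j)\|_{L^2} \le \sum_{\beta\le\alpha} \binom{\alpha}{\beta}\, |\zeta|^{|\beta|}\, e^{R/\sqrt2}\, \|\partial^{\alpha-\beta} r_j\|_{L^2}.
\]
This is bounded by $C(1+|\zeta|^2)^{s/2} e^{R/\sqrt2}\|r_j\|_{H^s}$. Inserting $\|r_j\|_{H^s}\le C_1/C_0$ from Proposition~\ref{prop:CGOregularity} gives the second term $C\frac{C_1}{C_0}(R^2+k^2)^{s/2}e^{R/\sqrt2}$.

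\emph{Remaining points.}
\begin{itemize}
\item The factor $(1+R^2+k^2)^{s/2}$ must be replaced by $(R^2+k^2)^{s/2}$. This is fine once $R^2+k^2\ge1$, which holds since $R$ is eventually large; otherwise the discrepancy is absorbed into $C$.
\item For fractional $s$ I would either interpolate the exponential-multiplier bound between integer orders $\lfloor s\rfloor$ and $\lceil s\rceil$, or argue with the Fourier characterization after extension.
\end{itemize}
I expect the fractional-order bookkeeping and the normalization of $\Omega$ (for the exact $e^{R/\sqrt2}$ factor) to be the only delicate points; the rest is routine.
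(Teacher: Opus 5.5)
Your proof is correct and gives the same bound. The difference from the paper is how you handle the product with $r_j$.

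\textbf{The paper's route.} The paper never splits $\phi_j$. It uses that $H^s(\Omega)$ is a Banach algebra for $s>\frac{n}{2}$:
$\|\phi_j\|_{H^s(\Omega)}\le C\|e^{i\zeta(j)\cdot x}\|_{H^s(\Omega)}\|1+r_j\|_{H^s(\Omega)}\le C\bigl(|\Omega|^{1/2}+\frac{C_1}{C_0}\bigr)\|e^{i\zeta(j)\cdot x}\|_{H^s(\Omega)}$.
It then only computes $\|e^{i\zeta(j)\cdot x}\|_{H^s(\Omega)}$, directly for integer $s$ and by interpolation otherwise, exactly as in your ``free exponential'' step.

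Your remark that the exponential is ``not in $H^s$ with a good constant'' is therefore off. Its $H^s$ norm is precisely the quantity $C(R^2+k^2)^{s/2}e^{R/\sqrt{2}}$ you compute, and that is all the algebra route needs.

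\textbf{Your route.} Your Leibniz argument bounds multiplication by $e^{i\zeta(j)\cdot x}$ as an operator on $H^s$, using sup-norm bounds $|\zeta(j)|^{|\beta|}e^{R/\sqrt{2}}$ on its derivatives. This step does not need $s>\frac{n}{2}$ (that hypothesis is then used only for the existence and decay of $r_j$). You could also streamline it: write $\phi_j=e^{i\zeta(j)\cdot x}(1+r_j)$ and apply a single multiplier-norm bound, interpolated between $\lfloor s\rfloor$ and $\lceil s\rceil$, rather than splitting. The paper's version is shorter because it reuses one computation.

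\textbf{Shared tacit assumptions.} Both arguments rely on the two normalizations you flagged.
\begin{itemize}
\item $|x|\le 1$ on $\Omega$, to get exactly $e^{R/\sqrt{2}}$. The paper likewise writes $\|e^{-\textup{Im}\,\zeta(j)\cdot x}\|_{L^2(\Omega)}^2\le Ce^{\sqrt{2}R}$.
\item $R^2+k^2$ bounded below. This is implicit in the paper's step $\sum_{l\le s}(R^2+k^2)^l\le C(R^2+k^2)^s$, and it holds in the application because $k\ge k_0>0$.
\end{itemize}
Without that lower bound, the gap between $(1+R^2+k^2)^{s/2}$ and $(R^2+k^2)^{s/2}$ cannot be absorbed into a uniform $C$. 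So your ``otherwise absorbed into $C$'' needs exactly this proviso.
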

\begin{proof}
As $s>\frac{n}{2}$, the space $H^s(\Omega)$ is a Banach algebra, hence
$$
\|\phi_j\|_{H^s(\Omega)} \leq \|e^{i\zeta(j)\cdot x}\|_{H^s(\Omega)} \|1+r_j\|_{H^s(\Omega)} \leq \left(|\Omega|^{\frac{1}{2}}+\frac{C_1}{C_0}\right) \|e^{i\zeta(j)\cdot x}\|_{H^s(\Omega)}.
$$
If $s$ is an integer, we write $D_l:=\frac{1}{i}\partial_{x_l}$ and estimate 
\begin{align*}
    \|e^{i\zeta(j)\cdot x}\|_{H^s(\Omega)}^2=
    &\sum_{l=0}^s\sum_{|\alpha|=l}\|D^\alpha e^{i\zeta(j)\cdot x}\|_{L^2(\Omega)}^2=\sum_{l=0}^s\sum_{\sum_{m=1}^n\alpha_m=l}\left(\prod_{m=1}^n|\zeta(j)_m|^{2\alpha_m}\right)\| e^{-\textup{Im}\zeta(j)\cdot x}\|_{L^2(\Omega)}^2\\
\leq & C\sum_{l=0}^s |\zeta(j)|^{2l} e^{\sqrt{2}R}=C\sum_{l=0}^s (R^2+k^2)^l e^{\sqrt{2}R}\leq C (R^2+k^2)^s e^{\sqrt{2}R}
\end{align*}
where $C$ only depends on $s,n,\Omega$. This proves the lemma for integer $s$. The estimate for non-integer $s$ follows from interpolation.
\end{proof}

As an intermediate step, we derive a pointwise estimate for the Fourier transform $\hat{\dot\sigma}$.
For simplicity we denote $\delta\coloneqq\|\dot{\Lambda}\|_{H_{00}^2((0,T)\times\partial\Omega)\to H^2((0,T)\times\partial\Omega)}$.

\begin{lemma} \label{thm:pointwiseest}
Suppose $\|\sigma_0\|_{H^s(\Omega)}\leq\frac{1}{C_0}$ with $s>\frac{n}{2}$ $(n\geq 3)$. There exists a constant $C>0$, independent of $k$ and $\delta$, such that
$$
    |\hat{\dot{\sigma}}(\xi)|\leq
    \begin{cases}
        C\frac{(3+2k)(1+k)^2}{k^2}\left((R_0^2+k^2)^{\max(s,4)}+C_q^2\right)e^{\sqrt{2}R_0}\delta + \frac{C_2k}{\sqrt{R_0^2+k^2}}\|\dot{\sigma}\|_{H^{-s}(\Omega)}&|\xi|\leq R_0+k,\\
        C\frac{(3+2k)(1+k)^2}{k^2}\left((|\xi|^2+k^2)^{\max(s,4)}+C_q^2\right)e^{\sqrt{2}|\xi|}\delta + \frac{C_2k}{\sqrt{|\xi|^2+k^2}}\|\dot{\sigma}\|_{H^{-s}(\Omega)}&|\xi|> R_0+k,\\
    \end{cases}
$$
where $R_0$ is an arbitrary fixed positive number, and the constant $C_2:=C_1 (2+\frac{C_1}{C_0})\|\sigma_0\|_{H^s(\Omega)}$ ($C_0,C_1$ are the constants from Proposition~\ref{prop:CGOregularity}).
\end{lemma}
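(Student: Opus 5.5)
We plan to insert the two CGO solutions into the linearized Blagove\u{s}\u{c}enski\u{\i} identity (Proposition~\ref{thm:linearizedBid}) with $\lambda=-ik$ and $\rho_0\equiv1$. Fix $\xi$, and set $R=R_0$ if $|\xi|\le R_0+k$ and $R=|\xi|$ otherwise. In both cases $k^2+R^2/2\ge |\xi|^2/4$, so $\zeta(1),\zeta(2)$ are well defined.

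\textbf{Step 1: controls.} Take $p_0^f(T)=\phi_1$ and $p_0^h(T)=\phi_2$. Condition~\eqref{eq:final} forces $q_0(T)=-\lambda^{-1}\nabla p_0(T)=\frac{1}{ik}\nabla\phi_j$. This is curl-free, and $\nabla^{-1}q_0(T)=\frac{1}{ik}\phi_j+C_q$. By Remark~\ref{thm:antideri} there exist $f,h\in C_c^\infty((0,T]\times\partial\Omega)$ realizing these final states, with
\[
\|f\|_{H^2}\le C\Big(\tfrac1k\|\phi_1\|_{H^4}+\|\phi_1\|_{H^3}+C_q\Big)\le C\,\tfrac{1+k}{k}\Big(\|\phi_1\|_{H^{\max(s,4)}}+C_q\Big),
\]
and the same bound for $h$ with $\phi_2$. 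Proposition~\ref{prop:control} requires all maximal geodesics to be shorter than $T$; we take this as a standing assumption.

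\textbf{Step 2: upper bound on the right-hand side.} We run exactly the Cauchy--Schwarz argument from the proof of Proposition~\ref{thm:stab}, using $|\lambda|=k$. This bounds the right side of~\eqref{eq:control} by $(3+2k)\|f\|_{H^2}\|h\|_{H^2}\,\delta$. Lemma~\ref{thm:CGOHs} with $s$ replaced by $\max(s,4)$ gives $\|\phi_j\|_{H^{\max(s,4)}}\le C(R^2+k^2)^{\max(s,4)/2}e^{R/\sqrt2}$. Note that $\|r_j\|_{H^{\max(s,4)}}$ must also be bounded; if Proposition~\ref{prop:CGOregularity} is applied at that Sobolev index, this needs a possibly different smallness constant, which we absorb. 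Squaring and using $(a+b)^2\le 2(a^2+b^2)$ yields the term
\[
C\,\frac{(3+2k)(1+k)^2}{k^2}\Big((R^2+k^2)^{\max(s,4)}+C_q^2\Big)e^{\sqrt2R}\,\delta .
\]

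\textbf{Step 3: the left-hand side.} Since $\zeta(1)+\zeta(2)=-\xi$, we have
\[
\phi_1\phi_2=e^{-i\xi\cdot x}(1+r_1+r_2+r_1r_2).
\]
Hence $\langle\phi_1,\phi_2\rangle_{L^2(\Omega,\dot\sigma\,dx)}=\hat{\dot\sigma}(\xi)+E$ under the complexified bilinear pairing, where $E=\int\dot\sigma e^{-i\xi\cdot x}(r_1+r_2+r_1r_2)\,dx$. By $H^{-s}$--$H^s$ duality and the algebra property,
\[
|E|\le C\|\dot\sigma\|_{H^{-s}}\Big(\|r_1\|_{H^s}+\|r_2\|_{H^s}+\|r_1\|_{H^s}\|r_2\|_{H^s}\Big).
\]
Here $e^{-i\xi\cdot x}$ has real frequency, and its $H^s$ norm is absorbed into the constants; this is the delicate step for tracking the constant exactly. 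By~\eqref{eq:restimate},
\[
\|r_j\|_{H^s}\le \frac{C_1k\|\sigma_0\|_{H^s}}{\sqrt{R^2+k^2}}\le \frac{C_1}{C_0}.
\]
Therefore $|E|\le C_1\big(2+\frac{C_1}{C_0}\big)\|\sigma_0\|_{H^s}\frac{k}{\sqrt{R^2+k^2}}\|\dot\sigma\|_{H^{-s}}=\frac{C_2k}{\sqrt{R^2+k^2}}\|\dot\sigma\|_{H^{-s}}$.

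\textbf{Conclusion and main obstacle.} Combining $|\hat{\dot\sigma}(\xi)|\le|\text{RHS}|+|E|$ with the two choices of $R$ gives the two cases of the lemma. The main obstacle is Step 1's regularity bookkeeping: Remark~\ref{thm:antideri} requires $H^4$ control of $\nabla^{-1}q_0(T)$, so we need $H^{\max(s,4)}$ bounds on the CGO solutions. Tracking the $1/k$ factor from $\lambda^{-1}$ is what produces the $(1+k)^2/k^2$ prefactor.
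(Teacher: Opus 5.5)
Your argument follows the paper's proof step for step. You insert CGO solutions $\phi_1,\phi_2$ with $\lambda=-ik$ as final states in \eqref{eq:control}, and bound the right-hand side by $(3+2k)\|f\|_{H^2}\|h\|_{H^2}\delta$ via Proposition~\ref{thm:stab}, Remark~\ref{thm:antideri} and Lemma~\ref{thm:CGOHs}. You then split off $E=\int_\Omega\dot\sigma e^{-i\xi\cdot x}(r_1+r_2+r_1r_2)\,dx$ and choose $R=R_0$ or $R=|\xi|$. Steps 1--2 are, if anything, more careful than the paper: you check that $\zeta(j)$ is defined for both choices of $R$, and you assign $\Psi=\phi_j$, $\Phi=\frac{1}{ik}\nabla\phi_j$ correctly, whereas the paper's text swaps them. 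One small correction: a smallness hypothesis cannot be ``absorbed'' into constants. Using Lemma~\ref{thm:CGOHs} at index $\max(s,4)$ requires additionally assuming $\|\sigma_0\|_{H^{\max(s,4)}(\Omega)}$ small; the paper makes this assumption silently as well.

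The genuine gap is in Step 3, where you say the $H^s$ norm of $e^{-i\xi\cdot x}$ is absorbed into the constants. On the Fourier side, multiplication by $e^{-i\xi\cdot x}$ is translation by $\xi$, so its operator norm on $H^s$ is of order $(1+|\xi|)^s$. Peetre's inequality gives $\|e^{-i\xi\cdot x}g\|_{H^s}\le 2^{s/2}(1+|\xi|^2)^{s/2}\|g\|_{H^s}$, and this is attained up to constants when $\hat g$ is concentrated near $0$. Duality therefore only yields
\[
|E|\le C(1+|\xi|^2)^{s/2}\,\frac{k}{\sqrt{R^2+k^2}}\,\|\sigma_0\|_{H^s(\Omega)}\|\dot\sigma\|_{H^{-s}(\Omega)} .
\]
The prefactor can be of order $k^s$ when $|\xi|\le R_0+k$, and grows like $k|\xi|^{s-1}$ when $R=|\xi|\to\infty$. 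That is not the $\xi$- and $k$-independent constant $C_2=C_1(2+\tfrac{C_1}{C_0})\|\sigma_0\|_{H^s}$ of the lemma. It is exactly this uniformity that Theorem~\ref{thm:IncreasingStability} needs in order to absorb the $H^{-s}$ term using $C_2<1$.

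The paper's proof has the same gap: it writes $|E|\le\|\dot\sigma\|_{H^{-s}}\|r_1+r_2+r_1r_2\|_{H^s}$, simply dropping the exponential. Pairing in $L^2$ instead, where $|e^{-i\xi\cdot x}|=1$, gives a bound uniform in $\xi$. However, that bound has $\|\dot\sigma\|_{L^2(\Omega)}$ in place of $\|\dot\sigma\|_{H^{-s}(\Omega)}$, which changes the statement and breaks the downstream absorption. As written, neither your argument nor the paper's establishes the estimate as stated.
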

\begin{proof}
From Proposition~\ref{prop:control}, there exists Neumann boundary controls $f_j$ such that $p^{f_j}_0(T) = \partial_t u_0^{f_j}(T)=\phi_j$, $j=1,2$. Inserting these into~\eqref{eq:control} with $\rho_0\equiv 1$ gives
$$
\int_\Omega \dot\sigma \phi_1 \phi_2 \,\dif x = \text{ RHS }.
$$
An upper bound for the right hand side (RHS) can be obtained in the same manner as in the proof of Proposition~\ref{thm:stab}. Therefore,
$$
    \left|\int_{\Omega}\dot\sigma\phi_1\phi_2\dif x\right| = |\text{RHS}| \leq C_\lambda \|\dot{\Lambda}\|_{H_{00}^2((0,T)\times\partial\Omega)\to H^2((0,T)\times\partial\Omega)} = C_\lambda \delta
$$
where the constant $C_\lambda$ given by~\eqref{eq:Clambda} satisfies
 $$
    \begin{aligned}
    C_\lambda = & (3+2|\lambda|)\|f_1\|_{H^2((0,T)\times\partial\Omega)}\|f_2\|_{H^2((0,T)\times\partial\Omega)}\\
    \leq& C(3+2k)\left(\|\phi_1\|_{H^3(\Omega)}+\left\|\nabla^{-1}\left[\frac{1}{\lambda}\nabla\phi_1\right]\right\|_{H^4(\Omega)}\right)\left(\|\phi_2\|_{H^3(\Omega)}+\left\|\nabla^{-1}\left[\frac{1}{\lambda}\nabla\phi_2\right]\right\|_{H^4(\Omega)}\right)\\
    \leq& C(3+2k)\left(\|\phi_1\|_{H^{\max(s,4)}(\Omega)}+\left\|\nabla^{-1}\left[\frac{1}{\lambda}\nabla\phi_1\right]\right\|_{H^{\max(s,4)}(\Omega)}\right)\\
    &\times\left(\|\phi_2\|_{H^{\max(s,4)}(\Omega)}+\left\|\nabla^{-1}\left[\frac{1}{\lambda}\nabla\phi_2\right]\right\|_{H^{\max(s,4)}(\Omega)}\right)\\
    \leq& C(3+2k)\left((R^2+k^2)^\frac{\max(s,4)}{2}e^{\frac{R}{\sqrt{2}}}+\frac{(R^2+k^2)^\frac{\max(s,4)}{2}}{k}e^{\frac{R}{\sqrt{2}}}+C_q\right)^2\\
    \leq& C(3+2k)\left(\frac{(1+k)^2}{k^2}(R^2+k^2)^{\max(s,4)}e^{\sqrt{2}R}+C_q^2\right)\\
    \leq& C(3+2k)\frac{(1+k)^2}{k^2}\left((R^2+k^2)^{\max(s,4)}+C_q^2\right)e^{\sqrt{2}R}\\
    \leq& C\frac{(3+2k)(1+k)^2}{k^2}\left((R^2+k^2)^{\max(s,4)}+C_q^2\right)e^{\sqrt{2}R}
    \end{aligned}    
$$
Here, the first inequality follows from Remark~\ref{thm:antideri} (with $\Phi = \phi_j$ and $\Psi = -\frac{1}{\lambda}\nabla\phi_j$ to fulfill~\eqref{eq:qcond}), and the third inequality follows from Lemma~\ref{thm:CGOHs}.

As for any $0\neq \xi \in\mathbb{R}^n$ ($n\geq 3$), we have
$$
\hat{\dot{\sigma}}(\xi) = \int_\Omega \dot\sigma e^{- i \xi \cdot x} \,\dif x = \int_{\Omega}\dot\sigma\phi_1\phi_2\dif x -\int_\Omega\dot{\sigma}e^{-i\xi\cdot x}(r_1+r_2+r_1r_2)\dif x.
$$
Thus, the Fourier transform $\hat{\dot\sigma}$ can be estimated as follows:
$$
    \begin{aligned}
    |\hat{\dot{\sigma}}(\xi)|\leq& \left|\int_{\Omega}\dot\sigma\phi_1\phi_2\dif x\right|+\left|\int_\Omega\dot{\sigma}e^{-i\xi\cdot x}(r_1+r_2+r_1r_2)\dif x\right|\\
    \leq& C_\lambda\delta + \|\dot{\sigma}\|_{H^{-s}(\Omega)}\|r_1+r_2+r_1r_2\|_{H^s(\Omega)}\\
    \leq& C_\lambda\delta + \|\dot{\sigma}\|_{H^{-s}(\Omega)}(\|r_1\|_{H^s(\Omega)}+\|r_2\|_{H^s(\Omega)}+\|r_1\|_{H^s(\Omega)}\|r_2\|_{H^s(\Omega)})\\
    \leq& C_\lambda\delta + \|\dot{\sigma}\|_{H^{-s}(\Omega)}\left(\|r_1\|_{H^s(\Omega)}+\|r_2\|_{H^s(\Omega)}+\frac{C_1}{C_0}\|r_2\|_{H^s(\Omega)}\right)\\
    \leq & C_\lambda\delta + \underbrace{ \left[ C_1 \|\sigma_0\|_{H^s(\Omega)}\left(2+\frac{C_1}{C_0}\right)\right] }_{:=C_2}\frac{k}{\sqrt{R^2+k^2}}\|\dot{\sigma}\|_{H^{-s}(\Omega)}
    \end{aligned}
$$
where the last inequality follows from~\eqref{eq:restimate}.

Finally, for an arbitrary number $R_0>0$, we choose $R=R_0$ when $|\xi|\leq R_0+k$ and $R=|\xi|$ when $|\xi|>R_0+k$. This gives the desired estimates.

\end{proof}

Using the pointwise estimate in Lemma~\ref{thm:pointwiseest}, we can obtain the following stability estimate, which shows that the reconstruction of $\dot\sigma$ becomes more stable when $k$ increases, see Remark~\ref{rmk:increasingstability}.

\begin{thm} \label{thm:IncreasingStability}
Let $s>\frac{n}{2}$ ($n\geq 3$). Suppose
$\|\sigma_0\|_{H^s(\Omega)} < C_1 (2+\frac{C_1}{C_0})$ so that $C_2<1$ ($C_2$ is the constant introduced in Lemma~\ref{thm:pointwiseest}) and suppose $\|\dot{\sigma}\|_{H^s(\Omega)}\leq M$ for some $M>0$. Then there exists a constant $C$, independent of $k$ and $\delta$, such that
$$
\|\dot{\sigma}\|_{L^{\infty}(\Omega)} \leq C\left[ \frac{(3+2k)(1+k)^2}{k^2}\left((R_0+k)^{2\max(s,4)}+C_q^2\right) e^{\sqrt{2}R_0}\delta + \left(k+\ln\frac{1}{\delta}\right)^{\frac{n-2s}{2}} \right]^\frac{2s-n}{8s}.
$$

for $0<\delta\leq e^{-1}$, $k\geq k_0$, where $e=2.71828\dots$ is the Euler's constant and $k_0$ is a positive constant. 
\end{thm}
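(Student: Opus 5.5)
We follow the standard increasing-stability route, as in Isakov et al.: interpolate $L^\infty$ between a negative Sobolev norm, controlled through the Fourier transform, and the a priori bound $\|\dot\sigma\|_{H^s}\le M$.

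\textbf{Interpolation step.} Extend $\dot\sigma$ by zero outside $\Omega$. Since $s>\frac n2$, the Sobolev embedding and interpolation give
$$
\|\dot\sigma\|_{L^\infty}\le C\|\dot\sigma\|_{H^{s'}}, \qquad s'\in(\tfrac n2,s),
$$
and
$$
\|\dot\sigma\|_{H^{s'}}\le C\|\dot\sigma\|_{H^{-s}}^{\theta}\|\dot\sigma\|_{H^{s}}^{1-\theta}, \qquad \theta=\tfrac{s-s'}{2s}.
$$
We choose $s'$ so that the exponent becomes $\frac{2s-n}{8s}$, for instance $s'=\frac{n}{2}+\frac{3(2s-n)}{4}\cdot\frac12$ (to be adjusted), and absorb $M$ into $C$. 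We also replace the extension issue by working with $\|\dot\sigma\|_{H^{-s}(\mathbb R^n)}$, which dominates $\|\dot\sigma\|_{H^{-s}(\Omega)}$ in the sense needed. It then suffices to show
$$
\|\dot\sigma\|_{H^{-s}}^2\le C\Big[\tfrac{(3+2k)(1+k)^2}{k^2}\big((R_0+k)^{2\max(s,4)}+C_q^2\big)e^{\sqrt2R_0}\delta+(k+\ln\tfrac1\delta)^{\frac{n-2s}{2}}\Big]^{1/2},
$$
or an analogous bound with a matching power.

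\textbf{Fourier splitting.} Write
$$
\|\dot\sigma\|_{H^{-s}}^2=\int(1+|\xi|^2)^{-s}|\hat{\dot\sigma}(\xi)|^2\,d\xi
$$
and split the integral at $|\xi|=\rho$, where $\rho\ge R_0+k$ is to be chosen. On $|\xi|\le R_0+k$ we use the first case of Lemma~\ref{thm:pointwiseest}. On $R_0+k<|\xi|\le\rho$ we use the second case, bounding $(|\xi|^2+k^2)^{\max(s,4)}e^{\sqrt2|\xi|}\le C(\rho+k)^{2\max(s,4)}e^{\sqrt2\rho}$. For $|\xi|>\rho$ we use $|\hat{\dot\sigma}|\le C\|\dot\sigma\|_{L^1}\le CM$, so the tail is $\le CM^2\rho^{n-2s}$. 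Squaring the pointwise bounds produces three contributions: a $\delta^2$ term with polynomial-exponential weights; the cross and $C_2^2k^2/(R^2+k^2)\,\|\dot\sigma\|_{H^{-s}}^2$ terms; and the tail. The term $C_2^2\frac{k^2}{R_0^2+k^2}\|\dot\sigma\|_{H^{-s}}^2$, integrated against the integrable weight $(1+|\xi|^2)^{-s}$ (since $2s>n$), is at most $C_2^2 c\,\|\dot\sigma\|_{H^{-s}}^2$. Because $C_2<1$, and after normalizing the weight's total mass, possibly via the $k\ge k_0$ freedom, this term is absorbed into the left side.

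\textbf{Choice of $\rho$.} This is the main obstacle. If $\ln\frac1\delta$ is large relative to $k$, we take $\rho=k+\frac{1}{2\sqrt2}\ln\frac1\delta$ (or similar). Then $e^{2\sqrt2\rho}\delta^2\lesssim e^{2\sqrt2 k}\delta$, the polynomial factors are handled using $\delta\le e^{-1}$, and the tail becomes $(k+\ln\frac1\delta)^{n-2s}$. If $\ln\frac1\delta\lesssim k$, we take $\rho=R_0+k$, so that only the first and tail pieces appear. The delicate part is making the exponential factor from the intermediate band $e^{\sqrt2\rho}$ combine into the stated form $e^{\sqrt2R_0}\delta$ without an $e^{ck}$ loss. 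We would try to keep the high-frequency band controlled purely through the $\ln\frac1\delta$ choice, absorbing any residual $e^{\sqrt2(\rho-R_0-k)}\delta\le\delta^{1/2}$ into $C$ times the $\delta$-term. We would then take square roots twice, once from the $L^2$ form and once from the $\delta^2$ versus $\delta$ bookkeeping, which yields the outer power $\frac{2s-n}{8s}$ after the interpolation step.
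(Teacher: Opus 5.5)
Your skeleton matches the paper's. You interpolate $L^\infty$ between $H^{-s}$ and the a priori $H^s$ bound, split the $H^{-s}$ integral into the bands $|\xi|\le R_0+k$, $R_0+k<|\xi|\le\rho$ and $|\xi|>\rho$, and use Lemma~\ref{thm:pointwiseest} on the first two bands and $|\hat{\dot\sigma}|\lesssim M$ on the tail. You also take $\rho=R_0+k$ when $\ln\frac1\delta$ is not large compared to $R_0+k$.

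\textbf{The gap.} It is the step you yourself flag as the main obstacle, and your proposed resolution goes in the wrong direction. The intermediate band can never be folded into the H\"older term $e^{\sqrt{2}R_0}\delta$. In the squared estimate its contribution is polynomial factors times $\delta$ (or $\delta^{1/2}$ in your version), and this must be compared with $\delta^2$. Since $\delta^{1/2}\le C\delta$ is false for small $\delta$, your plan to absorb the residual into ``$C$ times the $\delta$-term'' fails.

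Your choice $\rho=k+\frac{1}{2\sqrt{2}}\ln\frac1\delta$ makes things worse. The band then contributes $\mathrm{poly}(k,\ln\frac1\delta)\,e^{2\sqrt{2}k}\delta$. Under a case condition like $R_0+k\le\frac{1}{2\sqrt{2}}\ln\frac1\delta$, the factor $e^{2\sqrt{2}k}\delta$ is merely bounded, not small. A stricter threshold would make it decay, but the term would still have to go somewhere other than the H\"older term.

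\textbf{The missing idea.} Charge the intermediate band to the \emph{logarithmic} term. The paper takes $\xi_0=\frac{1}{2\sqrt{2}}\ln\frac1\delta$, with no $+k$. This is admissible exactly in the case $R_0+k\le\xi_0$, and it gives $e^{2\sqrt{2}\xi_0}\delta^2=\delta$. It then uses $k\le\xi_0$ to turn every $k$-dependent prefactor into a polynomial in $\xi_0$. The hypothesis $k\ge k_0$ is used only to keep $(1+k)^4/k^4$ bounded. Since $\xi_0^N\delta=\xi_0^N e^{-2\sqrt{2}\xi_0}$ stays bounded, the whole band is $\lesssim\xi_0^{n-2s}\lesssim(k+\ln\frac1\delta)^{n-2s}$. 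Only the low band $|\xi|\le R_0+k$, where $R=R_0$ is frozen, produces the $e^{\sqrt{2}R_0}\delta$ term.

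\textbf{Exponent bookkeeping.} Let $A$ denote the prefactor of $\delta$ in the theorem's bracket. That bracket is already the bound for $\|\dot\sigma\|_{H^{-s}}$ itself. A single square root of $\|\dot\sigma\|_{H^{-s}}^2\lesssim A^2\delta^2+(k+\ln\frac1\delta)^{n-2s}$ turns $\delta^2$ into $\delta$ and $n-2s$ into $\frac{n-2s}{2}$ at the same time, so there is no second square root. The target you need is therefore $\|\dot\sigma\|_{H^{-s}}^2\le C[\cdots]^2$, not $C[\cdots]^{1/2}$.

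The full exponent $\frac{2s-n}{8s}$ comes from interpolation alone, with $s'=\frac{n+2s}{4}$. This is the paper's choice $s=\frac n2+2\eta$, $\ell=s-\eta$. Your tentative $s'$ gives $\theta=\frac{2s-n}{16s}$ instead.

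\textbf{Absorbing the $C_2$-term.} The freedom $k\ge k_0$ cannot help here, because $k^2/(R_0^2+k^2)\to1$ as $k\to\infty$. The absorption rests solely on $C_2$ being small enough to beat the factor $2\int_{\mathbb{R}^n}(1+|\xi|^2)^{-s}\,d\xi$ accumulated over the two bands. The paper's own argument is also loose on this point.
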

\begin{proof}
By choosing constant $\xi_0 > R_0+k$, we write
$$
    \begin{aligned}
    \|\dot{\sigma}\|_{H^{-s}(\Omega)}^2=&\int_{\mathbb{R}^n}(1+|\xi|^2)^{-s}|\hat{\dot{\sigma}}(\xi)|^2\dif\xi\\
    =&\int_{|\xi|>\xi_0}(1+|\xi|^2)^{-s}|\hat{\dot{\sigma}}(\xi)|^2\dif\xi+\int_{R_0+k<|\xi|\leq\xi_0}(1+|\xi|^2)^{-s}|\hat{\dot{\sigma}}(\xi)|^2\dif\xi\\
    &+\int_{|\xi|\leq R_0+k}(1+|\xi|^2)^{-s}|\hat{\dot{\sigma}}(\xi)|^2\dif\xi\\
    \eqqcolon& I_1+I_2+I_3.
    \end{aligned}
$$
For $I_1$, the H\"older's inequality implies $|\hat{\dot\sigma}(\xi)|\leq C\|\dot\sigma\|_{L^2(\Omega)}$, hence
\begin{equation} \label{eq:I1bound}
    \begin{aligned}
    I_1 & := \int_{|\xi|>\xi_0}(1+|\xi|^2)^{-s}|\hat{\dot{\sigma}}(\xi)|^2\dif\xi \leq C\|\dot\sigma\|_{L^2(\Omega)}^2 \int_{|\xi|>\xi_0}\frac{1}{(1+|\xi|^2)^s}\dif\xi \\
    & \leq C\|\dot\sigma\|_{H^s(\Omega)}^2 \int_{|\xi|>\xi_0}\frac{1}{|\xi|^{2s}}\dif\xi 
    \leq C\|\dot\sigma\|_{H^s(\Omega)}^2 \xi_0^{n-2s}.
    \end{aligned}
\end{equation}
For $I_2$, we denote by $B(0,r)$ the ball of radius $r$ and apply the pointwise estimate in Lemma~\ref{thm:pointwiseest} with $|\xi|>R_0+k$ to get
\begin{equation} \label{eq:I2bound}
    \begin{aligned}
    I_2 & := \int_{R_0+k<|\xi|\leq\xi_0}(1+|\xi|^2)^{-s}|\hat{\dot{\sigma}}(\xi)|^2\dif\xi \\ 
    & \leq \|\hat{\dot{\sigma}}\|_{L^\infty(B(0,\xi_0)\setminus B(0,R_0+k))}^2\int_{\mathbb{R}^n}\frac{1}{(1+|\xi|^2)^s}\dif\xi\\
    & \leq 2C^2\frac{(3+2k)^2(1+k)^4}{k^4}\left((\xi_0^2+k^2)^{\max(s,4)}+C_q^2\right)^2e^{2\sqrt{2}\xi_0}\delta^2 + 2C_2^2\|\dot{\sigma}\|_{H^{-s}(\Omega)}^2,  
    \end{aligned}
\end{equation}
For $I_3$, we apply the pointwise estimate in Lemma~\ref{thm:pointwiseest} with $|\xi|\leq R_0+k$ to get
\begin{equation} \label{eq:I3bound}
    \begin{aligned}
        I_3 := & \int_{|\xi|\leq R_0+k}(1+|\xi|^2)^{-s}|\hat{\dot{\sigma}}(\xi)|^2\dif\xi \\
        \leq&\|\hat{\dot{\sigma}}\|_{L^\infty(B(0,R_0+k))}^2\int_{\mathbb{R}^n}\frac{1}{(1+|\xi|^2)^s}\dif\xi\\
        \leq& 2C^2\frac{(3+2k)^2(1+k)^4}{k^4}\left((R_0^2+k^2)^{\max(s,4)}+C_q^2\right)^2e^{2\sqrt{2}R_0}\delta^2 + 2C_2^2\|\dot{\sigma}\|_{H^{-s}(\Omega)}^2,       
    \end{aligned}
\end{equation}
Henceforth, the estimate will be split into two cases: $R_0+k\leq\frac{1}{2\sqrt{2}}\ln\frac{1}{\delta}$ and $R_0+k > \frac{1}{2\sqrt{2}}\ln\frac{1}{\delta}$.

When $R_0+k\leq\frac{1}{2\sqrt{2}}\ln\frac{1}{\delta}$, we choose $\xi_0=\frac{1}{2\sqrt{2}}\ln\frac{1}{\delta}$. 
Combining the bounds for $I_1, I_2, I_3$ in~\eqref{eq:I1bound} \eqref{eq:I2bound} \eqref{eq:I3bound}, we see that
$$
    \begin{aligned}
    \|\dot{\sigma}\|_{H^{-s}(\Omega)}^2\leq& 2C^2\frac{(3+2k)^2(1+k)^4}{k^4}\left((\xi_0^2+k^2)^{\max(s,4)}+C_q^2\right)^2e^{2\sqrt{2}\xi_0}\delta^2 \\
    &+2C^2\frac{(3+2k)^2(1+k)^4}{k^4}\left((R_0^2+k^2)^{\max(s,4)}+C_q^2\right)^2e^{2\sqrt{2}R_0}\delta^2\\
    &+ C\|\dot\sigma\|_{H^s(\Omega)}^2\xi_0^{n-2s} + 4C_2^2\|\dot{\sigma}\|_{H^{-s}(\Omega)}^2,       
    \end{aligned}
$$
As the assumption ensures $C_2<1$, the $H^{-s}$-norm on the right hand side can be absorbed by the left hand side. We get
$$
\begin{aligned}
    \|\dot{\sigma}\|_{H^{-s}(\Omega)}^2\leq& 2C^2\frac{(3+2k)^2(1+k)^4}{k^4}\left((R_0+k)^{2\max(s,4)}+C_q^2\right)^2e^{2\sqrt{2}R_0}\delta^2\\
    &+2C^2\frac{(3+2k)^2(1+k)^4}{k^4}\left((\xi_0+k)^{2\max(s,4)}+C_q^2\right)^2e^{2\sqrt{2}\xi_0}\delta^2+ CM^2 \xi_0^{n-2s}  \\
    := & 2C^2\frac{(3+2k)^2(1+k)^4}{k^4}\left((R_0+k)^{2\max(s,4)}+C_q^2\right)^2e^{2\sqrt{2}R_0}\delta^2 + I_4 + I_5
    \end{aligned}
$$
In the following, let us use $\lesssim$ to denote an inequality up to a constant factor that is independent of $k$ and $\delta$.
For $I_5$, we have
$$
    \begin{aligned}
        I_5 := & CM^2 \xi_0^{n-2s} \lesssim \left(\ln\frac{1}{\delta}\right)^{n-2s} \\
        \lesssim & \left(\frac{\ln\frac{1}{\delta}}{k+\ln\frac{1}{\delta}}\right)^{n-2s} \left(k+\ln\frac{1}{\delta}\right)^{n-2s} \\
        \lesssim & \left(\frac{2\sqrt{2} (R_0+k)}{k+2\sqrt{2} (R_0+k)}\right)^{n-2s}\left(k+\ln\frac{1}{\delta}\right)^{n-2s} \\
        \lesssim & \left(\frac{2\sqrt{2}}{1+2\sqrt{2}}\right)^{n-2s}\left(k+\ln\frac{1}{\delta}\right)^{n-2s} \\
        \lesssim & \left(k+\ln\frac{1}{\delta}\right)^{n-2s}
    \end{aligned}
$$
where the third line holds because the function $(\frac{t}{k+t})^{n-2s}$ is decreasing in $t>0$ and $\ln\frac{1}{\delta}\geq 2\sqrt{2}(R_0+k)$.
For $I_4$, we use $R_0\leq \xi_0$, $k\leq \xi_0$, and $e^{2\sqrt{2}\xi_0}\delta = 1$ to get
$$
    \begin{aligned}
        \xi_0^{2s-n} I_4 := &  2C^2 (3+2k)^2 \frac{(1+k)^4}{k^4} \xi_0^{2s-n} \left((\xi_0+k)^{2\max(s,4)}+C_q^2\right)^2  e^{2\sqrt{2}\xi_0}\delta^2 \\
        \lesssim &  (1+\xi_0)^2 \frac{(1+k)^4}{k^4} \xi_0^{2s-n} \left(\xi_0^{2\max(s,4)}+C_q^2\right)^2 \delta,
    \end{aligned}
$$
which is bounded since $\lim_{\delta\to 0_+} \delta \xi^N_0 = \lim_{\delta\to 0_+} \delta [\frac{1}{2\sqrt 2}\ln(\frac{1}{\delta})]^N = 0$ for any $N\geq 0$ and $\lim_{k\rightarrow\infty} \frac{(1+k)^4}{k^4} = 1$. Hence,
$$
    I_4 \lesssim \xi^{n-2s}_0 \lesssim I_5 \lesssim \left(k+\ln\frac{1}{\delta}\right)^{n-2s}.
$$
Combining the estimates for $I_4$ and $I_5$, we see that
$$
    \|\dot{\sigma}\|_{H^{-s}(\Omega)}^2 \lesssim \frac{(3+2k)^2(1+k)^4}{k^4}\left((R_0+k)^{2\max(s,4)}+C_q^2\right)^2e^{2\sqrt{2}R_0}\delta^2 + \left(k+\ln\frac{1}{\delta}\right)^{n-2s}
$$
Therefore,
$$
    \|\dot{\sigma}\|_{H^{-s}(\Omega)} \lesssim \frac{(3+2k)(1+k)^2}{k^2}\left((R_0+k)^{2\max(s,4)}+C_q^2\right) e^{\sqrt{2}R_0}\delta + \left(k+\ln\frac{1}{\delta}\right)^{\frac{n-2s}{2}}.
$$

On the other hand, when $R_0+k > \frac{1}{2\sqrt{2}}\ln\frac{1}{\delta}$, we choose $\xi_0=R_0+k$, then $I_2=0$. Combining the bounds for $I_1, I_3$ in~\eqref{eq:I1bound} \eqref{eq:I3bound}, we see that
$$
    \begin{aligned}
    \|\dot{\sigma}\|_{H^{-s}(\Omega)}^2\leq & 2C^2\frac{(3+2k)^2(1+k)^4}{k^4}\left((R_0^2+k^2)^{\max(s,4)}+C_q^2\right)^2e^{2\sqrt{2}R_0}\delta^2\\
    &+ C\|\dot\sigma\|_{H^s(\Omega)}^2\xi_0^{n-2s} + 2C_2^2\|\dot{\sigma}\|_{H^{-s}(\Omega)}^2,       
    \end{aligned}
$$
As $C_2<\frac{1}{2}$, the $H^{-s}$-norm on the right hand side can be absorbed by the left hand side. We get
$$
    \begin{aligned}
    \|\dot{\sigma}\|_{H^{-s}(\Omega)}^2\leq& 2C^2\frac{(3+2k)^2(1+k)^4}{k^4}\left((R_0+k)^{2\max(s,4)}+C_q^2\right)^2e^{2\sqrt{2}R_0}\delta^2 + CM^2 \xi_0^{n-2s}  \\
    := & 2C^2\frac{(3+2k)^2(1+k)^4}{k^4}\left((R_0+k)^{2\max(s,4)}+C_q^2\right)^2e^{2\sqrt{2}R_0}\delta^2 + I_5
    \end{aligned}
$$
This time, we estimate $I_5$ as follows:
$$
    \begin{aligned}
        I_5 := & CM^2 \xi_0^{n-2s} = CM^2 (R_0+k)^{n-2s} \\
        \lesssim & \left(\frac{R_0+k}{k+\ln\frac{1}{\delta}}\right)^{n-2s} \left(k+\ln\frac{1}{\delta}\right)^{n-2s} \\
        \lesssim & \left(\frac{R_0+k}{k+2\sqrt{2} (R_0+k)}\right)^{n-2s}\left(k+\ln\frac{1}{\delta}\right)^{n-2s} \\
        \lesssim & \left(\frac{1}{1+2\sqrt{2}}\right)^{n-2s}\left(k+\ln\frac{1}{\delta}\right)^{n-2s} \\
        \lesssim & \left(k+\ln\frac{1}{\delta}\right)^{n-2s}
    \end{aligned}
$$
where the third line holds because the function $(\frac{R_0+k}{k+t})^{n-2s}$ is increasing in $t>0$ and $\ln\frac{1}{\delta} < 2\sqrt{2}(R_0+k)$.
From this estimate for $I_5$, we see that
$$
    \|\dot{\sigma}\|_{H^{-s}(\Omega)}^2 \lesssim \frac{(3+2k)^2(1+k)^4}{k^4}\left((R_0+k)^{2\max(s,4)}+C_q^2\right)^2e^{2\sqrt{2}R_0}\delta^2 + \left(k+\ln\frac{1}{\delta}\right)^{n-2s}
$$
Therefore,
$$
\|\dot{\sigma}\|_{H^{-s}(\Omega)} \lesssim \frac{(3+2k)(1+k)^2}{k^2}\left((R_0+k)^{2\max(s,4)}+C_q^2\right) e^{\sqrt{2}R_0}\delta + \left(k+\ln\frac{1}{\delta}\right)^{\frac{n-2s}{2}}.
$$

Finally, we interpolate to obtain an estimate for the infinity norm.
Let $\eta>0$ be such that $s=\frac{n}{2}+2\eta$. Choosing $\ell_0=-s$, $\ell_1=s$, $\ell=\frac{n}{2}+\eta=s-\eta$. Then
\[\ell=(1-\tau)\ell_0 + \tau \ell_1,\textup{ where } \tau=\frac{2s-\eta}{2s} \quad \in (0,1).\]
Using the interpolation between Sobolev spaces and the Sobolev embedding, we obtain
$$
\begin{aligned}
\|\dot{\sigma}\|_{L^\infty(\Omega)} \leq& C\|\dot{\sigma}\|_{H^\ell(\Omega)}\leq C\|\dot{\sigma}\|_{H^{-s}(\Omega)}^{1-\tau}\|\dot{\sigma}\|_{H^s(\Omega)}^\tau\leq C M^\tau \|\dot{\sigma}\|_{H^{-s}(\Omega)}^{\frac{2s-n}{8s}}\\
\leq& C M^\tau \left[ \frac{(3+2k)(1+k)^2}{k^2}\left((R_0+k)^{2\max(s,4)}+C_q^2\right) e^{\sqrt{2}R_0}\delta + \left(k+\ln\frac{1}{\delta}\right)^{\frac{n-2s}{2}} \right]^\frac{2s-n}{8s}.
\end{aligned}
$$
\end{proof}

\begin{remark} \label{rmk:increasingstability}
Recall that $k\geq 0$ is an arbitrary non-negative constant. The estimate in Theorem~\ref{thm:IncreasingStability} shows that the stability for $\dot\sigma$ improves as $k$ increases. Indeed, for any fixed $\delta>0$, it is clear that $\left(k+\ln\frac{1}{\delta}\right)^{\frac{n-2s}{2}} \rightarrow 0$ as $k \rightarrow\infty$ since $n-2s<0$. Therefore, the first term in the square parenthesis dominates the right hand side as $k$ increases, yielding a nearly H\"older-type stability estimate.
\end{remark}

\section{Numerical Implementation} \label{sec:numvalid}
This section is devoted to numerical implementation of the reconstruction formula~\eqref{eq:reconstruction1d} in one dimension (1D) when $\rho_0=1$, $\sigma_0=0$. 

The reconstruction procedure in Section~\ref{sec:ConstBGDamp} can be summarized as follows.
\begin{enumerate}
    \item Choose $\lambda = ik$ with $k>0$ and $\theta\in\mathbb{S}^{n-1}$.
    \item Solve the boundary control equations~\eqref{eq:helmholtzsol} for $f$ and $h$.

    \item Compute $\hat{\dot \sigma}(2k\theta)$ from~\eqref{eq:reconstruction1d}.
    \item Repeat the above steps with various $k>0$ and $\theta\in\mathbb{S}^{n-1}$ to recover the Fourier transform $\hat{\dot \sigma}$.
    \item Invert the Fourier transform to recover $\dot\sigma$.
\end{enumerate}
Step 2 requires solving boundary control equations, for which the existence of solutions are ensured by Proposition~\ref{prop:control}. In the special case that $\rho_0\equiv 1$ and $\sigma_0\equiv 0$, we can adapt the idea in the authors' earlier work~\cite{oksanen2022linearized} to give analytic solutions using a simple time reversal procedure.

\subsection{Computing Boundary Controls using Time Reversal}

Let $\Omega=(a,b)$ be a 1D interval with end points $a,b$ ($a<b$). Given a smooth function $\phi\in C^{\infty}([a,b]])$, we can extend it from $[a,b]$ to $\mathbb{R}$ as follows:
$$
    \tilde\phi :=
    \begin{cases}
    \phi&x\in[a,b],\\
    \phi\cdot\exp\{1-\frac{1}{1-(x-a)^{2d}}\}&x\in(a-1,a),\\
    \phi\cdot\exp\{1-\frac{1}{1-(x-b)^{2d}}\}&x\in(b,b+1),\\
    0&x\notin(a-1,b+1),
    \end{cases}
$$
where $d$ is a positive integer. It is easy to verify that $\tilde{\phi}$ is $C^{2d-1}$ at $x=a,b$ and $C^\infty$ at other points. To guarantee the existence of the second-order derivative of Neumann data, we take $d\geq2$.

For any two smooth functions $\phi,\psi\in C^\infty([a,b])$ with extensions $\tilde{\phi}, \tilde{\psi}$, we consider the 1D initial value problem for the wave equation:
$$
\left\{
\begin{array}{rlll}
    \partial_t^2w(t,x)-\Delta w(t,x)&=0\quad&&\text{ in }(0,2T)\times\mathbb{R},\\
    w(T,x)&=\tilde\phi(x)+\frac{1}{2}\int_{-\infty}^{\infty}\tilde{\psi}(\xi)\dif\xi\qquad&&\text{ on } \mathbb{R},\\
    \partial_t w(T,x)&=\tilde\psi(x)&&\text{ on } \mathbb{R}.\\
\end{array}
\right.
$$
Its solution, according to the D'Alembert formula, is
\begin{equation} \label{eq:dalembert}
    w(t,x) = \frac{1}{2}\left[\tilde{\phi}(x+T-t)+\tilde{\phi}(x-T+t)\right]-\frac{1}{2}\int_{x-T+t}^{x+T-t}\tilde{\psi}(\xi)\dif\xi+\frac{1}{2}\int_{-\infty}^{\infty}\tilde{\psi}(\tau)\dif\tau.
\end{equation}
Moreover, it can be analytically verified that $w(0,x) = 0$ and $\partial_t w(0,x) = 0$ in $\Omega$ (this is why the constant $C_q=\frac{1}{2}\int^\infty_{-\infty} \tilde{\psi}(\xi)\,\dif\xi$ is chosen in $w(T,x)$).
We conclude that $w|_{(0,2T)\times\Omega}$ is a solution of the boundary value problem~\eqref{eq:wave0} (with $\rho_0 \equiv 1$ and $\sigma_0\equiv 0$). 
As a result, the boundary control equation $w^f(T)=\phi$ in $\Omega$ admits the explicit solution 
\begin{equation}\label{eq:reversal}
f(t,x) = \partial_\nu w(t,x) = \pm\frac{1}{2}\left.\left[\tilde{\phi}'(x+T-t)+\tilde{\phi}'(x-T+t)+\tilde{\psi}(x-T+t)-\tilde{\psi}(x+T-t)\right]\right|_{x=a,b}
\end{equation}
where we take $+$ at $x=b$ and $-$ at $x=a$. Higher order derivatives such as $\partial_t f$ and $\partial^2_t f$ can be computed based on this explicit form. These derivatives are needed in order to compute the Fourier transform $\hat{\dot{\sigma}}$, see~\eqref{eq:reconstruction1d}.

If we introduce $p^f=\partial_t w$ and $q^f=\nabla w$.
Given any $\phi\in C^\infty([a,b])$, the analysis above shows that the boundary control equation $p^f(T)=\phi$ in $\Omega$ can be solved as follows:
\begin{enumerate}
    \item Define $\psi:=-\frac{1}{\lambda} \nabla \phi$ to fulfill~\eqref{eq:final}.
    \item Compute $w$ using~\eqref{eq:dalembert}.
    \item Compute $f$ using~\eqref{eq:reversal}. 
\end{enumerate}

This construction offers an advantage as the boundary control equation allows for explicit solutions. The rest of the construction goes as follows.
Choose $\lambda = ik$. For a fixed $k>0$, define
\[\phi_k(x) :=\frac{i}{k}\exp(ikx)+\frac{1}{2}\int_{-\infty}^{\infty}\tilde{\psi}_k(\xi)\dif\xi,\qquad\psi_k(x)=\exp(ikx).\]
Solve the boundary control equation $u^{f_k}_0(T)=\phi_k$ analytically to find $f_k$. Then
\begin{align*}
q_0^{f_k}(T) & = \nabla u_0^{f_k}(T) = \nabla\phi_k(x) =-\exp(ikx) \\
p_0^{f_k}(T) & =\partial_t u_0^{f_k}(T)=\exp(ikx).
\end{align*}
Finally, take two different $k_1,k_2>0$ and insert the analytic construction of $f_{k_1}, f_{k_2}, p^{f_{k_1}}_0(T), p^{f_{k_2}}_0(T)$ into~\eqref{eq:control}. We obtain the Fourier transform
\[\hat{\dot{\sigma}}(k_1+k_2)=\langle p_0^{f_{k_1}}(T),p_0^{f_{k_2}}(T)\rangle_{L^2(\Omega,\dot\sigma\dif x)}.\]

\subsection{Numerical experiments}
We take the 1D computational domain $\Omega=[-1,1]$ and $T=5$. The wave equation is solved using the second order central difference scheme with spacing $\Delta x = \frac{1}{250}$ and $\Delta t = \frac{1}{2500}$. We choose $p_0(T)$ from the following Fourier basis (which are the real and imaginary parts of $\exp(ikx)$ with $k = \frac{\pi}{2}, \dots, \frac{N\pi}{2}$): 
\[\left\{ 1, \sin\left(\frac{\pi}{2}x\right),\cos\left(\frac{\pi}{2}x\right),\dots,\sin\left(\frac{N\pi}{2}x\right),\cos\left(\frac{N\pi}{2}x\right) \right\}\]
with $N=10$. Since $q_0(T)$ is related to $p_0(T)$ by the equation \eqref{eq:final}, we take 
\[q_0(T)=-\frac{1}{\lambda}\nabla p_0(T)\]
with $\lambda = ki =\frac{\pi}{2}i,\dots,\frac{N\pi}{2}i$. 
By explicitly constructing the boundary controls using the time reversal method in the previous section and \eqref{eq:reconstruction1d}, we obtain the Fourier modes of $\dot{\sigma}$ using the trigonometric relations 
\[\sin^2\theta+\cos^2\theta=1,\quad\sin2\theta=2\sin\theta\cos\theta,\quad\cos2\theta=\cos^2\theta-\sin^2\theta\]

\begin{algorithm}[!h]
    \SetAlgoLined
	\caption{Reconstruction}  \label{alg:reconstruction}
 \vspace{1ex}
	\KwIn{ND map $\dot{\Lambda}$, background density $\rho_0\equiv 1$, background damping $\sigma_0\equiv 0$, noise level $\varepsilon$ and domain $\Omega$ \medskip}
	
		 Choose the Fourier truncation $N$. 
         
         \For{$k = 1,2,\dots, N$}{
            
          $p_0(T)\gets\sin\left(\frac{k\pi}{2}x\right)$, $q_0(T)\gets\frac{2}{k\pi i}\cos\left(\frac{k\pi}{2}x\right)$.
   
	   Construct the boundary control $f_k$ and its derivatives using the time reversal method, see \eqref{eq:reversal}.

        Calculate $\dot\Lambda f_k$ and its derivatives by numerically solving $\eqref{eq:wave0}$ and $\eqref{eq:wavedot}$.

        $p_0(T)\gets\cos\left(\frac{k\pi}{2}x\right)$, $q_0(T)\gets-\frac{2}{k\pi i}\sin\left(\frac{k\pi}{2}x\right)$.

        Construct boundary control $h_k$ and its derivatives using time reversal method.

        Calculate $\dot\Lambda h_k$ and its derivatives.

        Add Gaussian random noise with level $\varepsilon$ to $\dot\Lambda f_k$, $\dot\Lambda h_k$ and their derivatives independently.

        Calculate $a_k\coloneqq\langle p_0^{h_k},p_0^{h_k}\rangle_{L^2(\Omega,\dot\sigma\dif x)}-\langle p_0^{f_k},f_0^{f_k}\rangle_{L^2(\Omega,\dot\sigma\dif x)}$, $b_k\coloneqq 2\langle p_0^{f_k},p_0^{h_k}\rangle_{L^2(\Omega,\dot\sigma\dif x)}$ using \eqref{eq:reconstruction1d}.

           }
    
        Calculate $a_0\coloneqq\langle p_0^{h_1},p_0^{h_1}\rangle_{L^2(\Omega,\dot\sigma\dif x)}+\langle p_0^{f_1},f_0^{f_1}\rangle_{L^2(\Omega,\dot\sigma\dif x)}$.

        The reconstructed $\tilde{\dot{\sigma}}$ is given by $\frac{a_0}{2}+\sum_{k=1}^Na_k\sin(k\pi x) + b_k\cos(k\pi x)$.  \medskip

        \KwOut{sound speed perturbation $\dot{\rho}$}
\end{algorithm}

\textbf{Experiment 1:}
In this experiment, we start with the ground-truth perturbation 
\[\dot{\sigma} = \cos(\pi x) + \cos(2\pi x) + \cos(3\pi x) + \sin(4\pi x)+4,\]
which can be represented using the Fourier basis. The graph of $\dot\sigma$ is shown in Figure~\ref{fig:groundtruth1}. We add $0\%,1\%,5\%$ Gaussian noise to the measurement $\dot\Lambda$, respectively. The reconstructions and corresponding errors are illustrated in Figure~\ref{fig:reconstruction1}.
\begin{figure}[!h]
    \centering
    \includegraphics[width=0.49\textwidth]{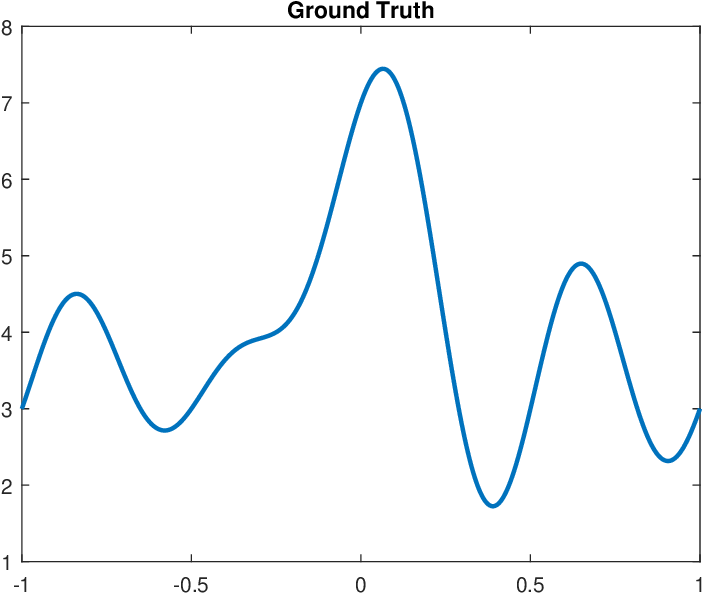}
    \caption{Continuous ground truth $\dot{\sigma} = \cos(\pi x) + \cos(2\pi x) + \cos(3\pi x) + \sin(4\pi x)+4$.}
    \label{fig:groundtruth1}
\end{figure}
\begin{figure}[!h]
    \centering
    \includegraphics[width=0.48\textwidth]{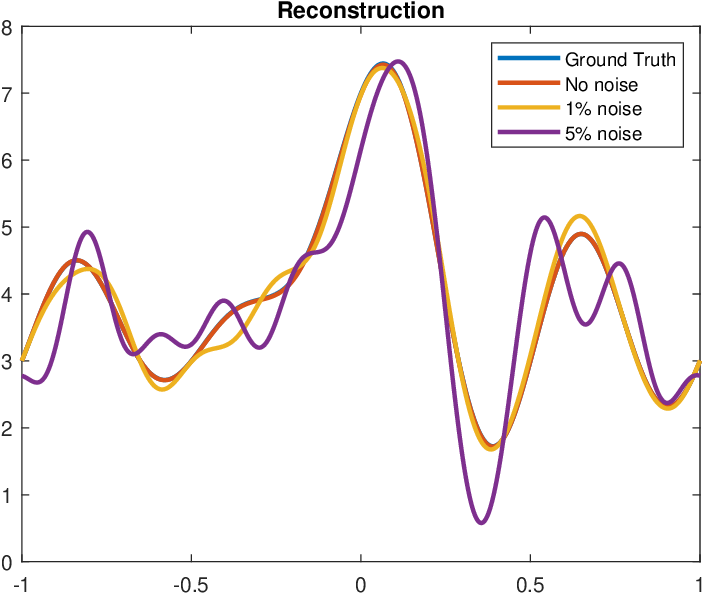}
    \includegraphics[width=0.49\textwidth]{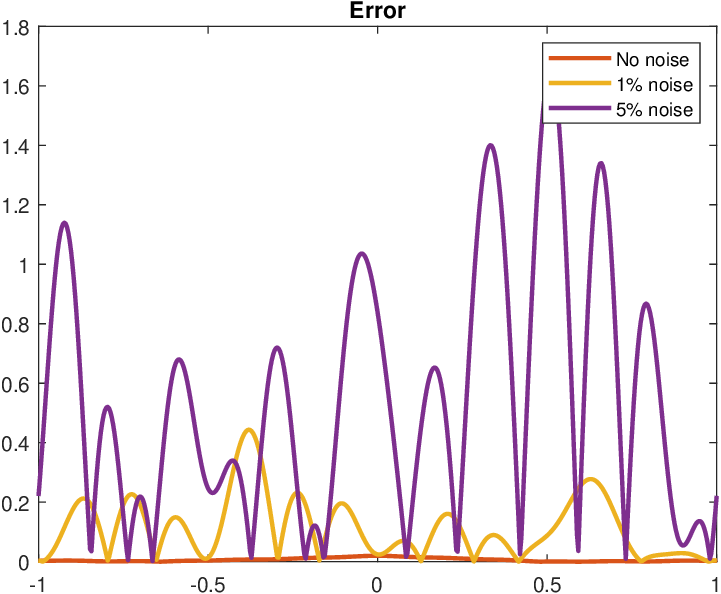}
    \caption{Left: Reconstructed $\dot\sigma$ with $0\%,1\%,5\%$ Gaussian noise and the ground truth. Right: The corresponding error between the reconstruction result and the ground truth. The relative $L^2$-errors are $0.2\%,3.5\%$ and $16.2\%$, respectively.}
    \label{fig:reconstruction1}
\end{figure}

\medskip
\textbf{Experiment 2:}
In this experiment, we consider a piecewise discontinuous perturbation
\[\dot{\sigma}=\left\{
    \begin{alignedat}{2}
        2&&\qquad-1\leq x&\leq-\frac{1}{2}\\
        \frac{3}{2}&&-\frac{1}{2}<x&<\hphantom{-}\frac{1}{3}\\
        1&&\frac{1}{3}\leq x&\leq\hphantom{-}1
    \end{alignedat}
\right.\]
Its Fourier series is
\[\dot{\sigma}=\frac{35}{24}+\sum_{k=1}^\infty\left[\frac{\sin\left(\frac{k\pi}{3}\right)-\sin\left(\frac{k\pi}{2}\right)}{2k\pi}\cos(k\pi x)-\frac{\cos\left(\frac{k\pi}{3}\right)+\cos\left(\frac{k\pi}{2}\right)-2\cos(k\pi)}{2k\pi}\sin(k\pi x)\right].\]
With the choice of the truncated Fourier basis, we can only expect to reconstruct the orthogonal projection:
\[\dot{\sigma}_N\coloneqq\frac{35}{24}+\sum_{k=1}^N\left[\frac{\sin\left(\frac{k\pi}{3}\right)-\sin\left(\frac{k\pi}{2}\right)}{2k\pi}\cos(k\pi x)-\frac{\cos\left(\frac{k\pi}{3}\right)+\cos\left(\frac{k\pi}{2}\right)-2\cos(k\pi)}{2k\pi}\sin(k\pi x)\right],\]
see Figure~\ref{fig:groundtruth2}. We plot the reconstruction and the corresponding error with respect to the orthogonal projection $\dot{\sigma}_N$ in Figure~\ref{fig:reconstruction2}.

\begin{figure}[!h]
    \centering
    \includegraphics[width=0.49\textwidth]{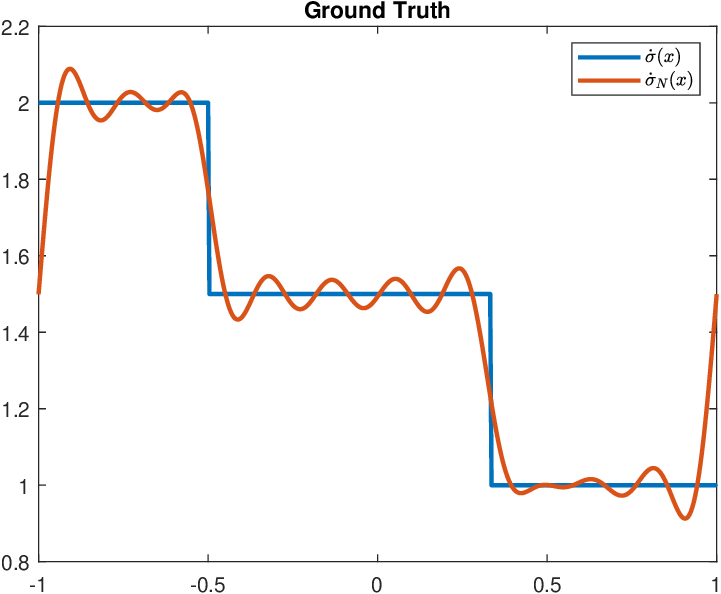}
    \caption{Piecewise-constant ground truth $\dot\sigma$ and its Fourier truncation $\dot\sigma_N$ with $N=10$.}
    \label{fig:groundtruth2}
\end{figure}
\begin{figure}[!h]
    \centering
    \includegraphics[width=0.49\textwidth]{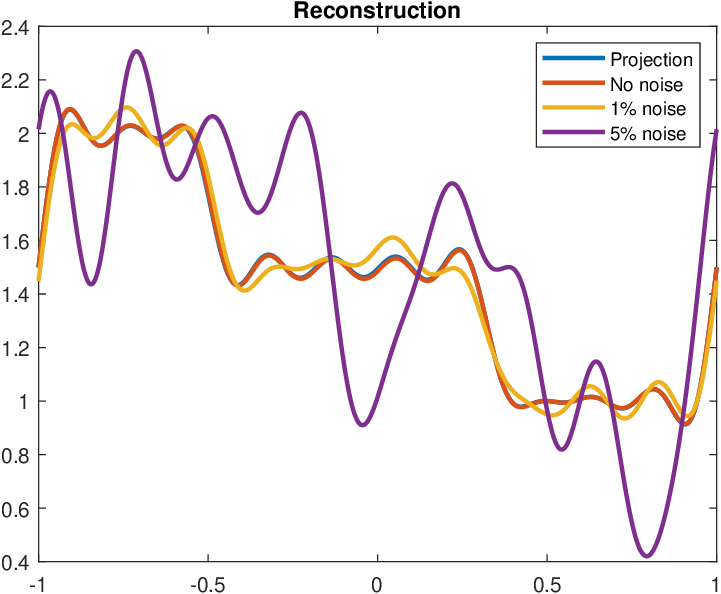}
    \includegraphics[width=0.49\textwidth]{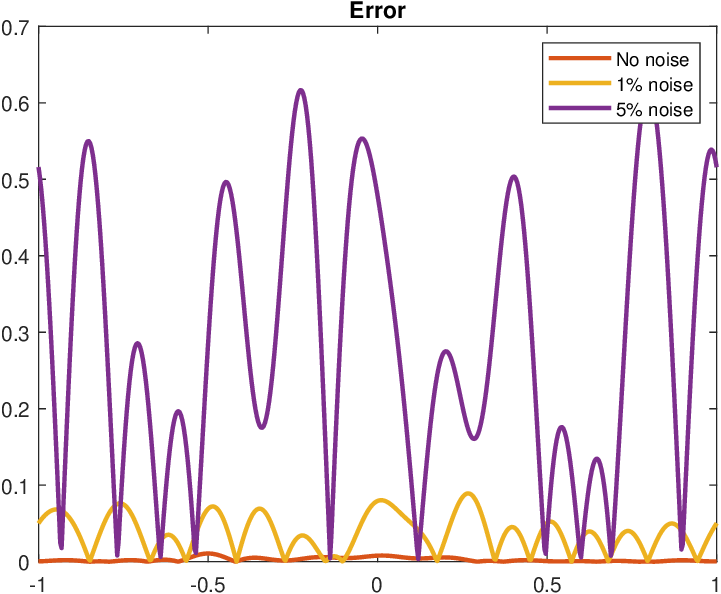}
    \caption{Left: The reconstructions with $0\%,1\%,5\%$ Gaussian noise and the orthogonal projection $\dot\sigma_N$. Right: Errors between the reconstructions and the orthogonal projection $\dot\sigma_N$. The relative $L^2$-errors are $0.2\%,3.0\%$ and $22.5\%$, respectively.}
    \label{fig:reconstruction2}
\end{figure}

\medskip
\textbf{Experiment 3:}
In this experiment, we apply the algorithm in a non-linear case. The absorption coefficient is given by 
\[\sigma = \sigma_0 + \varepsilon\dot{\sigma} + \varepsilon^2\ddot{\sigma},\]
where $\varepsilon>0$ is a small constant and
\[\dot{\sigma} = \cos(\pi x) + \cos(2\pi x) + \cos(3\pi x) + \sin(4\pi x)+4,\qquad\ddot{\sigma} = 200\sin(20\pi x).\]
In this case, since
\[\Lambda_\sigma-\Lambda_{\sigma_0}\approx\varepsilon\dot\Lambda_{\dot\sigma}=\dot\Lambda_{\varepsilon\dot\sigma}\]
when $\varepsilon$ is small, we replace $\dot\Lambda f$ with $\Lambda_\sigma f-\Lambda_{\sigma_0}f$ in \eqref{eq:reconstruction1d}, where $\Lambda_\sigma f$ and $\Lambda_{\sigma_0}f$ are computed by numerically solving \eqref{eq:wave}. Applying Algorithm~\ref{alg:reconstruction} gives us an approximation of $\sigma$. We choose $\varepsilon=10^{-3}$, the ground truth is illustrated in Figure~\ref{fig:groundtruth3}. 

When adding Gaussian noise, we added the noise to the difference $\Lambda_\sigma f-\Lambda_{\sigma_0}f$ rather than adding it to $\Lambda_\sigma f$ and $\Lambda_{\sigma_0}f$ respectively. This is because $|\Lambda_\sigma f|$ and $|\Lambda_{\sigma_0}f|$ could be large when $|\Lambda_\sigma f-\Lambda_{\sigma_0}f|$ is small, adding noise independently could cause larger error than adding noise to the difference with the same noise level. The reconstruction result and the corresponding error with $0\%,1\%,5\%$ Gaussian noise are illustrated in Figure~\ref{fig:reconstruction3}.

\begin{figure}[!h]
    \centering
    \includegraphics[width=0.49\textwidth]{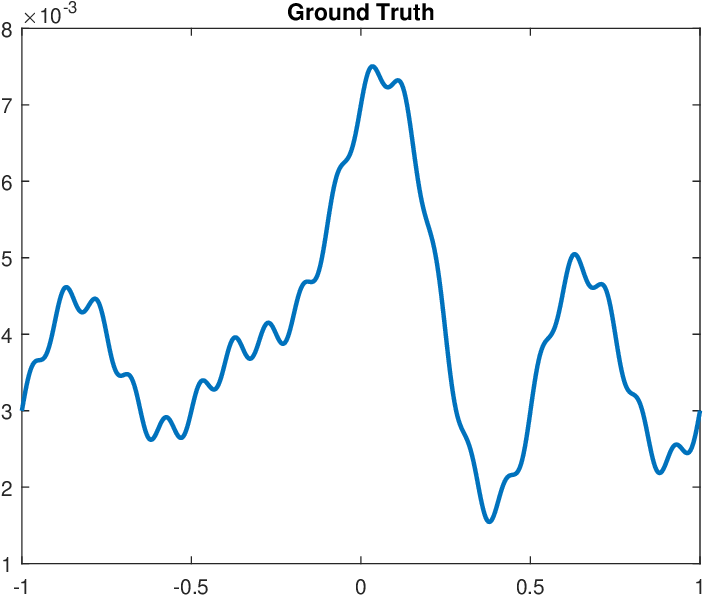}
    \caption{Ground truth $\dot\sigma$}
    \label{fig:groundtruth3}
\end{figure}
\begin{figure}[!h]
    \centering
    \includegraphics[width=0.49\textwidth]{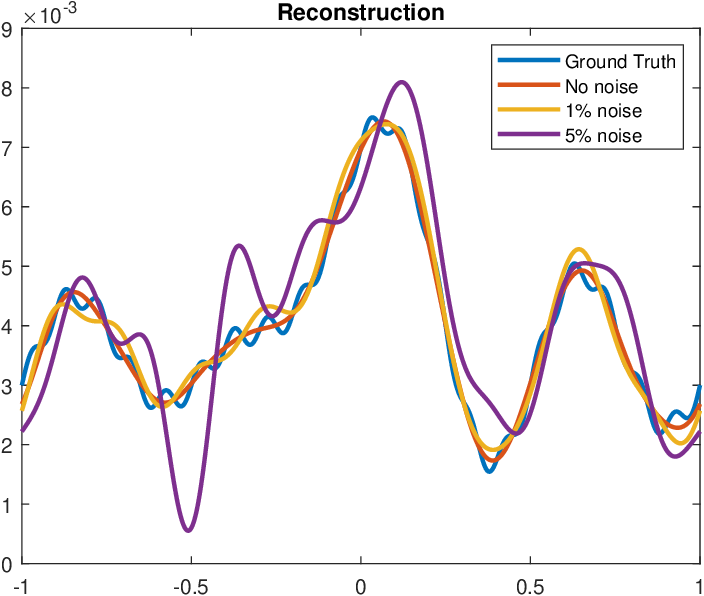}
    \includegraphics[width=0.49\textwidth]{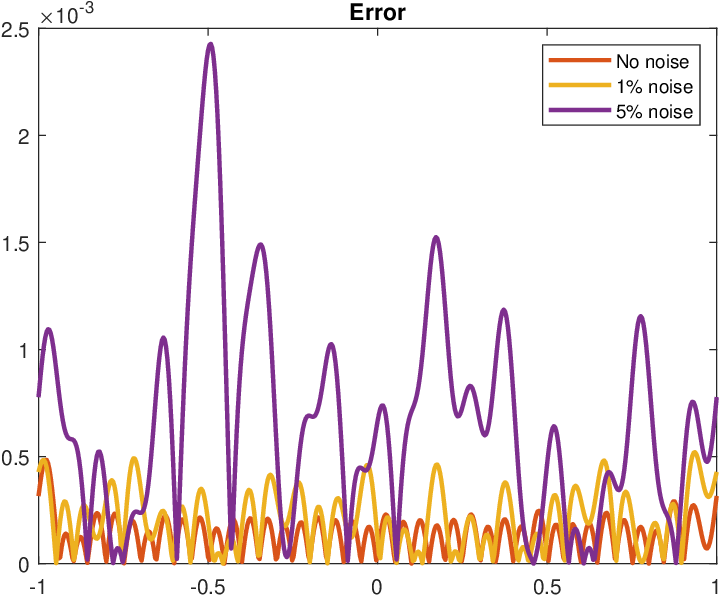}
    \caption{Left: Reconstructed $\dot\sigma$ with $0\%,1\%,5\%$ Gaussian noise and the ground truth. Right: The corresponding error between the reconstruction result and the ground truth. The relative $L^2$-errors are $3.7\%,5.9\%$ and $19.4\%$, respectively.}
    \label{fig:reconstruction3}
\end{figure}

\section*{Acknowledgment}
The research is partially supported by the NSF grants DMS-2237534 and DMS-2220373. The authors thank Dr. Lauri Oksanen for helpful discussion.

\section{Appendix}

In this appendix, we prove the boundedness of the ND map~\eqref{eq:NDmap} between suitable spaces for small $\sigma$.

The following well-posedness result for the undamped wave equation with $\sigma\equiv 0$ is well known (see, e.g., \cite{evans1998partial}):
For $\mathfrak{f}\in H^s((0,2T)\times\Omega)$, the boundary value problem
\begin{equation}
\left\{
\begin{alignedat}{2}
    \square_{\rho,0} v(t,x)&=\mathfrak{f} \quad&&\text{ in }(0,2T)\times\Omega,\\
    v(0,x)=\partial_t v(0,x)&=0\quad&&\text{ on } \Omega,\\
    \partial_\nu v(t,x)&=0\quad&&\text{ on }(0,2T)\times\partial\Omega,
\end{alignedat}
\right.
\end{equation}
admits a unique solution $v\in H^{s+1}((0,2T)\times\Omega)$. Moreover, there exists a constant $C>0$ such that 
$$
\|v\|_{H^{s+1}((0,2T)\times\Omega)} \leq C  \|\mathfrak{f}\|_{H^s((0,2T)\times\Omega)}.
$$
If we introduce the solution operator $S\mathfrak{f}:=v$, then $S: H^s((0,2T)\times\Omega) \rightarrow H^{s+1}((0,2T)\times\Omega)$ is a bounded linear operator.

\begin{prop} \label{thm:Lambdaboundedness}
    Let $s\geq \frac{1}{2}$ be a real number. If  $\|\sigma\|_{W^{s-\frac{1}{2},\infty}(\Omega)}$ is sufficiently small, then for any $f\in H_{00}^s((0,2T)\times\partial\Omega)$, the boundary value problem~\eqref{eq:wave} admits a unique solution $u\in H^{s+\frac{1}{2}}((0,2T)\times\Omega)$. As a result, the Neumann-to-Dirichlet map
 $$
 \Lambda_\sigma:H_{00}^s((0,2T)\times\partial\Omega) \rightarrow H^s((0,2T)\times\partial\Omega)
$$ 
is a bounded linear operator.
\end{prop}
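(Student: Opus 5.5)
The plan is to treat the damping term as a perturbation of the undamped problem and close a fixed-point argument with the solution operator $S$. First I reduce to homogeneous Neumann data exactly as in the proof of Proposition~\ref{thm:Lambdadotboundedness}. Given $f\in C_c^\infty((0,2T]\times\partial\Omega)$, I extend it to $F\in H^{s+\frac32}((0,2T)\times\Omega)$ with $\partial_\nu F=f$, with $F$ vanishing near $t=0$, and with $\|F\|_{H^{s+\frac32}}\le C\|f\|_{H^s}$. Writing $u=v+F$, the problem \eqref{eq:wave} becomes
$$\square_{\rho,0}v+\sigma\partial_t v=-\square_{\rho,\sigma}F,\qquad v(0)=\partial_tv(0)=0,\qquad \partial_\nu v=0 .$$
The right-hand side lies in $H^{s-\frac12}((0,2T)\times\Omega)$, with norm at most $C(1+\|\sigma\|_{W^{s-\frac12,\infty}})\|f\|_{H^s}$.

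Next I recast this as the fixed-point equation $v=S\big(-\square_{\rho,\sigma}F-\sigma\partial_t v\big)$, taking $S$ with index $s-\frac12$, so that $S:H^{s-\frac12}\to H^{s+\frac12}$. The map $v\mapsto -S(\sigma\partial_t v)$ sends $H^{s+\frac12}((0,2T)\times\Omega)$ to itself. Indeed, $\partial_t:H^{s+\frac12}\to H^{s-\frac12}$ is bounded, and multiplication by the time-independent $\sigma(x)$ is bounded on $H^{s-\frac12}$ with norm $\le C\|\sigma\|_{W^{s-\frac12,\infty}}$. This is the step where the $W^{s-\frac12,\infty}$ norm enters. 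For integer order it follows from the Leibniz rule, and for non-integer order I would obtain it by interpolation between the neighbouring integers (with $s-\frac12\ge0$). Hence the operator norm of $v\mapsto S(\sigma\partial_tv)$ is at most $C\|S\|\,\|\sigma\|_{W^{s-\frac12,\infty}}<1$ once $\sigma$ is small. The Neumann series (equivalently, the Banach fixed point theorem) then gives a unique $v\in H^{s+\frac12}$ with
$$\|v\|_{H^{s+\frac12}}\le C\|\square_{\rho,\sigma}F\|_{H^{s-\frac12}}\le C\|f\|_{H^s}.$$

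I also have to check that this $v$, and so $u=v+F$, is really the solution of \eqref{eq:wave}. For smooth $f$ and $\sigma$, the standard smooth solution $u^f$ gives a $v$ satisfying the same fixed-point equation, so uniqueness of the fixed point identifies the two. Uniqueness of $u$ in $H^{s+\frac12}$ follows in the same way: the difference of two solutions solves the homogeneous fixed-point equation, hence vanishes.

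Finally, the trace theorem gives $\|u|_{(0,2T)\times\partial\Omega}\|_{H^s}\le C\|u\|_{H^{s+\frac12}}\le C\|f\|_{H^s}$, with $C$ independent of $f$. Density of $C_c^\infty((0,2T]\times\partial\Omega)$ in $H^s_{00}$ then extends $\Lambda_\sigma$ to a bounded operator, and the a priori bound shows that the extended $u$ still solves \eqref{eq:wave} in $H^{s+\frac12}$. The main obstacle is the multiplier bound for $\sigma$ on fractional $H^{s-\frac12}$ of the space-time cylinder. I would handle it by noting that $\sigma$ depends only on $x$, so the bound reduces to interpolating the integer-order Leibniz estimates in the spatial variables.
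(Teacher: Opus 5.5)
Your proposal is correct and follows the paper's proof essentially step for step: extend $f$ to $F$, write $v=u^f-F$, invert $I+S\circ\sigma\circ\partial_t$ on $H^{s+\frac12}((0,2T)\times\Omega)$ by a Neumann series when $\|\sigma\|_{W^{s-\frac12,\infty}(\Omega)}$ is small, and conclude by the trace theorem and density. Two of your steps are details the paper leaves implicit without changing the route: identifying the fixed point with the smooth solution, and deriving uniqueness from the homogeneous fixed-point equation.
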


\begin{proof}
The linearity is clear. To show the boundedness, take $f\in C^\infty_c((0,2T]\times\partial\Omega)$ and extend it to $F\in H^{s+\frac{3}{2}}((0,2T)\times\Omega)$ such that $\partial_\nu F=f$ and $F(t,x)=0$ for any $x\in\Omega$ and $t$ close to $0$. Such $F$ can be chosen to satisfy
\begin{equation} \label{eq:fextension}
\|F\|_{H^{s+\frac{3}{2}}((0,2T)\times\Omega)}\leq C\|f\|_{H^s((0,2T)\times\partial\Omega)}.
\end{equation}
Denote $v\coloneqq u^f-F$ where $u^f$ is the solution of~\eqref{eq:wave} with the Neumann boundary condition $f$, then $v$ satisfies
$$
\left\{
\begin{alignedat}{2}
\square_{\rho,0}v(t,x)&=-\square_{\rho,\sigma}F(t,x) - \sigma(x) \partial_t v \quad&&\text{ in }(0,2T)\times\Omega,\\
    v(0,x)=\partial_tv(0,x)&=0\quad&&\text{ on } \Omega,\\
    \partial_\nu v(t,x)&=0\quad&&\text{ on }(0,2T)\times\partial\Omega.
\end{alignedat}
\right.
$$
Applying the solution operator $S$ , we get an integral equation
$$
\left( I +  S \circ \sigma \circ \partial_t  \right) v = -S(\Box_{\rho,\sigma} F)
$$
Note that $\partial_t: H^{s+\frac{1}{2}}((0,2T)\times\Omega) \rightarrow H^{s-\frac{1}{2}}((0,2T)\times\Omega)$ is bounded and multiplication by $\sigma\in C^\infty(\overline{\Omega})$ is bounded on $H^{s-\frac{1}{2}}((0,2T)\times\Omega)$. Therefore, if $\|\sigma\|_{W^{s-\frac{1}{2},\infty}(\Omega)}$ is sufficiently small, the operator $I+S\circ\sigma\circ\partial_t$ is boundedly invertible on $H^{s+\frac{1}{2}}((0,2T)\times\Omega)$. Hence,
$$
v = -(I+S\circ\sigma\circ\partial_t)^{-1} S(\Box_{\rho,\sigma} F) \qquad \in H^{s+\frac{1}{2}}((0,2T)\times\Omega)
$$
with the norm estimate
\begin{equation} \label{eq:vestimate}
\|v\|_{H^{s+\frac{1}{2}}((0,2T)\times\Omega)} 
\leq C \|S(\Box_{\rho,\sigma} F)\|_{H^{s+\frac{1}{2}}((0,2T)\times\Omega)} \leq C \|\Box_{\rho,\sigma} F\|_{H^{s-\frac{1}{2}}((0,2T)\times\Omega)} 
\leq C \|F\|_{H^{s+\frac{3}{2}}((0,2T)\times\Omega)}
\end{equation}
for some constant $C>0$. Combining~\eqref{eq:fextension} \eqref{eq:vestimate} and the continuity of the trace operator, we conclude 
\begin{align*}    
 \|\Lambda_\sigma f \|_{H^s((0,2T)\times\partial\Omega)} & \leq C \|u \|_{H^{s+\frac{1}{2}}((0,2T)\times\Omega)} \\
 & \leq C \left( \|v\|_{H^{s+\frac{1}{2}}((0,2T)\times\Omega)} + \|F\|_{H^{s+\frac{1}{2}}((0,2T)\times\Omega)} \right) \\
 & \leq C \|f\|_{H^s((0,2T)\times\partial\Omega)}.
 \end{align*}
This proves the claim for $f\in C^\infty_c((0,2T]\times\partial\Omega)$. The general case follows from the density of such $f$ in $H_{00}^s((0,2T)\times\partial\Omega)$.
\end{proof}

\bibliographystyle{abbrv}
\bibliography{ref}
\end{document}